\newcommand{\GS}{\mathcal{GS}}
\newcommand{\HH}{\mathcal{H}}
\newcommand{\mvsps}{MVSPs}
\newcommand{\ff}{\mathbb F}
\newcommand{\fq}{\mathbb{F}_q}
\newcommand{\fqn}{\mathbb{F}_{q^n}}
\newcommand{\F}{\mathbb{F}}
\newcommand{\Q}{Q}
\newcommand{\W}{\mathcal W}
\newcommand{\X}{\mathcal X}
\newcommand{\divv}{\operatorname{div}_{\infty}}
\newcommand{\ftil}{{f}}
\newcommand{\uu}{{u}}
\newcommand{\vv}{{v}}
\newcommand{\gal}{\operatorname{Gal}}
\newcommand{\Conceicao}{Concei{\c c}\~ao}
\newcommand{\tra}[2]{T_{#1}}
\newtheorem{theorem}{Theorem}[section]
\newtheorem{lemma}[theorem]{Lemma}
\newtheorem{proposition}[theorem]{Proposition}
\newtheorem{corollary}[theorem]{Corollary}
\newdefinition{definition}{Definition}
\newdefinition{example}{Example}
\newdefinition{remark}{Remark}[section]
\title{Some elementary abelian p-extensions and a generalization of the Hermitian curve}
\begin{document}


\makeatletter
\def\ps@pprintTitle{%
  \let\@oddhead\@empty
  \let\@evenhead\@empty
  \let\@oddfoot\@empty
  \let\@evenfoot\@oddfoot
}
\makeatother

\begin{frontmatter}

\title{Minimal value set polynomials and a generalization of the Hermitian curve}
\author[herivas]{Herivelto Borges}
\author[rico]{Ricardo \Conceicao}
\address[herivas]{Universidade de S\~ao Paulo, Inst. de Ci\^encias Matem\'aticas e de Computa\c c\~ao, S\~ao Carlos, SP 13560-970, Brazil.}
\address[rico]{Oxford College of Emory University. 100 Hamill Street, Oxford, Georgia 30054.}

\date{July 2010}

\begin{abstract} 
 We use a recent characterization of minimal value set polynomials and $q$-Frobenius nonclassical curves to construct curves that generalize the Hermitian curve. The genus $g$ and the number $N$ of  $\F_q$-rational points of the curves are computed and, for a special family of these curves, we determine the Weierstrass semigroup at the unique point at infinity.  These special curves yield  new examples of Castle curves  and improve on a previous example of Garcia-Stichtenoth of curves with large ratio $N/g$.
\end{abstract}
\begin{keyword}
Minimal value set polynomials \sep Value set \sep Finite Field \sep Hermitian curve \sep Frobenius nonclassical curve.
\end{keyword}

\end{frontmatter}

\section{ Introduction}
One of the fundamental results in the theory of  curves over a finite field is the Hasse-Weil bound 
$$
|N -(q+1)|\leq 2g\sqrt{q},
$$
which relates the number $N$ of $\fq$-rational points on (smooth, geometrically irreducible projective) curves to its genus $g$ and the size of the finite field. This result has inspired a great amount of work in the area, specially after a construction by Goppa \cite{Goppa_Codes} of linear codes with good parameters from curves with many  rational points, that is, curves with somewhat large ratio $N/g$.
%
%

In addition to coding theory, curves over finite fields with many rational points  have found applications in areas such as  finite geometry \cite{Hirschfel_Projective}, correlation of shift register sequences \cite{lidl_finite_field_83}, and number theory \cite{Moreno_algebraic,Stepanov_Algebraic}, among others.  

Central to the theory of curves over finite fields  and its applications is the notion of a \emph{maximal curve} -- a curve  that attains the Hasse-Weil upper bound. The \emph{Hermitian curve}, defined over $\ff_{q^2}$ by 
$$
y^{q}+y=x^{q+1},
$$
is the classical and most important example of  a curve with this property. Due to its many nice arithmetic and geometric properties, there has been substantial interest in finding curves  over $\fqn$, $n\geq 2$, that  generalizes the Hermitian curve. The  simplest  generalization of the Hermitian curve  is the so-called \emph{norm-trace} curve, which is defined over $\fqn$ by
\begin{equation}\label{norm-trace}
 y^{q^{n-1}}+\cdots+y^q=x^{1+q+q^2+\cdots+q^{n-1}}.
\end{equation}
Note that when $n = 2$ the curve is just the Hermitian curve. Its genus $g$ is $(q^{n-1} -1)(q^{n-1}+q^{n-2}+\cdots+q)/2$ and the number $N$ of $\fqn$-rational points of this curve is $q^{2n-1}+1$. As demonstrated by Geil \cite{Geil_normtrace}, the norm-trace curve can be used to construct  AG codes with good parameters. In addition,  AG codes from the norm-trace curve have been used in different applications to coding  theory, see for instance \cite{MR3015349,MR2980466,MR2836238,MR2287392}.

In 1999, Garcia and Stichtenoth \cite{Garcia_A_class_of_poly_1999424} constructed a generalization of the Hermitian curve for which the ratio $N/g$  is larger than the corresponding one for the norm-trace curve. More specifically, for $n\geq 2$, they show that the curve  defined over $\F_{q^n}$ by 
\begin{equation}\label{GS}
\GS: y^{q^{n-1}}+\cdots+y^q+y=x^{q+1}+x^{1+q^2}+\cdots+x^{q^{n-1}+q^{n-2}}
\end{equation}
has $N=q^{2n-1}+1$  $\F_{q^n}$-rational points and genus $g=(q^{n-1}-1)q^{n-1}/2$. 
Such curve, that  is now commonly known as \emph{the generalized Hermitian curve}, not only  possess a large set of rational points, but  also shares many other interesting properties with the norm-trace curve. For instance, Bulygin \cite{bulygin_generalize_hermitian} (for $p=2$) and Munuera et al.  \cite{munuera_sepulv_Generl_hermitian_code} (for $p>2$) determined the Weierstrass semigroup $H(P_{\infty})$ at the  only point at infinity $P_{\infty}=(0:1:0) \in \GS$. 
 This information is crucial to the construction of algebraic geometric codes (AG codes, for short) with good parameters. Furthermore,  Munuera et al. \cite{Munuera_sepulv_Algebraic_codes_castle}  proves that the norm-trace curve and the generalized Hermitian curve  belong to a class of curves that they call  ``Castle curves" (see Section \ref{sec:weierstrass_semigrp} for definition). Such  curves possess certain arithmetic properties that make them suitable for the construction of AG codes with good parameters.

The main purpose of this paper is to  introduce a class of curves that generalizes not only the Hermitian curve but also the norm-trace curve and the generalized Hermitian curve $\GS$. Similar to the previous generalizations, our alternative generalization of the Hermitian curve are  irreducible curves defined over $\F_{q^n}$ ($n\geq 2$) by
\begin{equation}\label{Frob1}
y^{q^{n-1}}+\cdots+y^q+y =f(x),
\end{equation}
where  $f(x)$ is a polynomial over $\fqn$. In our construction  we consider $f(x)$ to be an element of a large but proper subset of
\begin{equation}\label{W-set}
\mathcal{W}:=\{\text{minimal value set polynomials $F(x)$ over $\fqn$ with $V_F=\F_q$}\},
\end{equation}
where $V_F:=\{F(\alpha):\alpha\in\fqn\}$, and \emph{minimal value set polynomials} (\mvsps) are non-constant polynomials $F$  satisfying
$$
\#V_F=\left\lfloor\dfrac{q^n-1}{\deg F} \right\rfloor+1.
$$  
We denote by $\mathcal{X}$ any  curve defined by \eqref{Frob1} with $f(x)\in \W$. We note that  $\mathcal{X}$ maybe considered as a generalization of the norm-trace curve, the curve  $\GS$ and (consequently)  the Hermitian curve. Indeed, for the norm-trace curve it is easy to see that the polynomial  on the right-hand side of \eqref{norm-trace} is in $\W$. For the generalized Hermitian curve, our recent progress in the characterization of \mvsps\ in \cite{borges_con_charac_mvsp} shows  that the polynomial  on the right-hand side of \eqref{GS} is in $\W$ (for more details, see \cite[Section 6]{Borges}).

Our choice of $f(x)$ is inspired  by  the close connection between MVSPs and $q$-Frobenius nonclassical curves recently discovered by the second author in \cite{Borges}.  Apart from few special cases, the results in \cite[Section 3]{Borges} imply that an irreducible curve
\begin{equation}\label{Frob}
g(y)=f(x),
\end{equation}
defined over $\F_q$, is $q$-Frobenius nonclassical  if and only if $f(x),g(x) \in \F_{q}[x]$ are MVSPs with $V_f=V_g$. Since $y^{q^{n-1}}+\cdots+y^q+y\in \W$,  this result shows that $\X$ is a $q$-Frobenius nonclassical curve. As such, $\X$ is worthy of further investigation since the class of $q$-Frobenius nonclassical curves is  well-regarded as a potential source of curves with many rational points and interesting arithmetic and geometric properties, cf. \cite{Stohr_Voloch,Hefez_Voloch}.


After a preliminary analysis of some curves of type $$y^{q^{n-1}}+\cdots+y^q+y =f(x),$$ a judicious choice of $f(x) \in \mathcal{W}$ yields  a particular  generalization  $\HH$ (Section \ref{sec:many_pnts}) of the Hermitian curve that not only  shares many of  the nice properties satisfied by the Hermitian curve and its previous generalizations, but  also has a ratio $N/g$  at least $q^{\frac{n}{2}-3}$ times bigger than the corresponding ratio for  the curve $\mathcal{GS}$.  Just as in the case of the norm-trace  and the Garcia-Stichtenoth curves, we are able to compute the genus and the number of rational points of $\HH$ (Proposition \ref{prop:curve_many_pnts});   compute the Weierstrass semigroup of its unique point at infinity $Q_\infty$ (Theorem \ref{the:weiertrass_gen}); and show that $\HH$ provides new examples of Castle curves (Corollary \ref{cor:castle_curv}). We should  mention that,  in a companion paper \cite{Borges1}, the present generalization of the Hermitian curve is   investigated from the perspective of Finite Geometry. In \cite{Borges1}, it is shown that some of these curves  yield new complete $(N,d)$-arcs which  are closely related to the  Artin-Schreier curves  studied by Coulter in \cite{Coulter1}.

To finish, we would like to say a few words about the organization of this work and the techniques used. In Section \ref{sec:curve_mvsp}, we discuss some key properties of the underlying polynomials of some of the curves $\mathcal{X}$,  establish  its irreducibility  and   compute its genus. Here we  follow the method  used by Garcia-Stichtenoth for the curve $\mathcal{GS}$ which in turn relies on the theory of  Artin-Schreier  extensions and on the genus formula for elementary abelian $p$-extensions of the rational function field.  However, as we are dealing with a larger family of curves, the process here will be  quite involved. 
In Section \ref{sec:many_pnts}, we specialize the results of Section  \ref{sec:curve_mvsp} to the particular generalization of the Hermitian curve $\HH$ discussed briefly above. With an eye towards future application to AG codes with good parameters, we also compute in ad hoc manner the Weierstrass semigroup $H(Q_{\infty})$ at the point at infinity of $\HH$ and show that $H(Q_{\infty})$ is telescopic (see \cite[Definition 6.1]{Kirfel_Pellikaan_telescopic}). 
As a consequence of this computation, we show that our present work enlarges the class of  Castle curves.

\section{Frobenius nonclassical curves from MVSPs}\label{sec:curve_mvsp}

Let $n\geq 2$ be an integer. As discussed in the introduction, we are interested in the curves $\X$ defined over $\fqn$ by
$$ 
y^{q^{n-1}}+\cdots+y^q+y=f(x),
$$ 
where $f(x)$ is an element of 
$$
\mathcal{W}:=\{\text{\mvsps} \hspace{0.2 cm} F \in \F_{q^n}[x] : V_F=\F_q\}.
$$

As noted before, $\X$ is a $q$-Frobenius nonclassical  curve that generalizes the Hermitian curve. In addition, the following lemma shows that the polynomials in $\W$ satisfy some of the properties that Garcia-Stichtenoth \cite[Section 1]{Garcia_A_class_of_poly_1999424} observed to be key in the construction of curves with many rational points.
\begin{lemma} \label{lem:garcia_prop} Let  $f(x)$ be a polynomial in $\W$. Then:
\begin{itemize}
\item[(a)] The value set of ${f}(x)$ satisfies $V_f\subseteq \fq$.  
\item[(b)] ${f}(x)-\gamma=0$ has a simple root for all but possibly one  $\gamma\in\fq$.
\item[(c)] Let $\gamma\in\fq$. A root of ${f}(x)-\gamma=0$ is not in $\fq$ if and only if its multiplicity is divisible by $p$.
\end{itemize}
\end{lemma}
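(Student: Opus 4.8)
The whole proof rests on unwinding the definition of a minimal value set polynomial (MVSP) for $f \in \W$. Recall $f$ is an MVSP with $V_f = \F_q$, so $\#V_f = q$ and $\deg f$ satisfies $\lfloor (q^n-1)/\deg f \rfloor + 1 = q$, i.e. $q-1 \le (q^n-1)/\deg f < q$, which pins down $\deg f$ to the range $\tfrac{q^n-1}{q} < \deg f \le \tfrac{q^n-1}{q-1}$. Part (a) is then immediate: by the very definition of $\W$ we have $V_f = \F_q \subseteq \fq$, so there is essentially nothing to prove beyond quoting the hypothesis. The real content is in (b) and (c), and the unifying tool I would use is the \emph{value polynomial} identity
\[
\prod_{\gamma \in V_f}\bigl(f(x)-\gamma\bigr) \;=\; \prod_{\alpha \in \fqn}\bigl(x-\alpha\bigr)^{?} \cdot (\text{factor coprime to }\fqn\text{-roots}),
\]
or more precisely the classical characterization: since $\#V_f$ is minimal, the polynomial $H(T) := \prod_{\gamma \in V_f}(T-\gamma)$ has the property that $H(f(x))$ is divisible by $\prod_{\alpha \in \fqn}(x-\alpha) = x^{q^n}-x$ with high multiplicity. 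The plan is to compare degrees in $H(f(x)) = \prod_{\gamma \in \fq}(f(x)-\gamma)$: the left side has degree $q \cdot \deg f$, and it must contain $x^{q^n}-x$ as a factor since every $\alpha \in \fqn$ sends $f(\alpha)$ into $V_f = \fq$.

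For part (b), I would count multiplicities across all $\gamma \in \fq$ simultaneously. Writing $m_\gamma$ for the sum of multiplicities of the roots of $f(x)-\gamma$ lying in $\fqn$, each such $\gamma$ contributes, and the total number of roots (with multiplicity) of $\prod_\gamma (f(x)-\gamma)$ is exactly $q\deg f$. Because $x^{q^n}-x$ divides this product, every element of $\fqn$ is a root of $f(x)-\gamma$ for exactly one $\gamma$, accounting for $q^n$ roots with multiplicity at least one. The degree bound $\deg f \le \tfrac{q^n-1}{q-1}$ forces $q\deg f \le q + \tfrac{q(q^n-q)}{q-1}$, and a careful tally shows the ``slack'' between $q^n$ and $q\deg f$ is too small to allow every $f(x)-\gamma$ to have all roots of multiplicity $\ge 2$; the arithmetic will show at most one $\gamma$ can fail to have a simple root. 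Concretely, if $f(x)-\gamma$ had \emph{no} simple root for two distinct values $\gamma$, the contributed multiplicity would exceed the available degree $q \deg f$, a contradiction.

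For part (c), the key observation is that for $\alpha \in \fqn$, the element $\alpha$ lies in $\fq$ if and only if $\alpha$ is a root of $x^q - x$, whereas $x^{q^n}-x = (x^q-x)\cdot g(x)$ with $g$ collecting the roots in $\fqn \setminus \fq$. The claim about multiplicity being divisible by $p$ should fall out of the \emph{derivative} criterion: a root $\alpha$ of $f(x)-\gamma$ has multiplicity divisible by $p$ precisely when $f'(\alpha) = 0$ together with higher-order vanishing, and I would relate $f'$ to the factorization via $\gcd(f(x)-\gamma, f'(x))$. The cleanest route is to show that the roots of $f(x)-\gamma$ in $\fq$ are exactly the simple ones (or those of multiplicity prime to $p$) while roots outside $\fq$ must occur with multiplicity divisible by $p$, by matching the factor $(x^q-x)$ against the simple-root part and forcing the remaining factor $g(x)$ to appear as a $p$-th power structure. \textbf{The main obstacle} I anticipate is the bookkeeping in (b): making the multiplicity count rigorous requires pinning down exactly how $x^{q^n}-x$ sits inside $\prod_\gamma(f(x)-\gamma)$ and controlling the one exceptional value of $\gamma$, since the floor in the MVSP definition allows $\deg f$ not to divide $q^n-1$ evenly — that indivisibility is precisely what creates the single possible exceptional $\gamma$, and threading it correctly through the degree inequality is the delicate part.
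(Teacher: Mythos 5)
First, note that the paper itself disposes of this lemma essentially by citation: part (a) is immediate from the definition of $\W$, and parts (b) and (c) are quoted verbatim from Lemma 2.4 of the companion paper \cite{borges_con_charac_mvsp}, so any self-contained argument already goes beyond what the authors wrote. Your part (a) is fine. Your part (b) can in fact be completed along the lines you sketch: writing $N_\gamma$ for the number of distinct roots of $f(x)-\gamma$ in $\fqn$, the inclusion $V_f=\fq$ gives $\sum_{\gamma\in\fq}N_\gamma=q^n$, and if two values of $\gamma$ admitted no simple root then $N_\gamma\le \deg f/2$ for both of them, whence $q^n\le (q-2)\deg f+2\lfloor \deg f/2\rfloor\le (q-1)\deg f\le q^n-1$ by the bound $\deg f\le (q^n-1)/(q-1)$ that you correctly extracted from the MVSP condition — a contradiction. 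That is a correct and genuinely elementary route to (b), different from the paper's appeal to an external lemma.

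Part (c), however, is a genuine gap. You give no argument there, only the hope that the claim ``should fall out of the derivative criterion'' together with a ``$p$-th power structure.'' The derivative criterion only detects multiplicity at least $2$, not divisibility of the multiplicity by $p$; nothing in the degree and value-set count you set up forces a root outside $\fq$ to have multiplicity divisible by $p$ (note also that such a root may lie in $\fqn\setminus\fq$ or outside $\fqn$ entirely — your sketch conflates $\fq$ with $\fqn$ at the point where you invoke $x^q-x$), nor does it explain why a root inside $\fq$ must have multiplicity prime to $p$. This statement really depends on the explicit shape of the elements of $\W$: they are $\fq$-linear combinations of the polynomials $T_n\left(x^{1+q^{r_1}+\cdots+q^{r_t}}\right)\bmod (x^{q^n}-x)$, whose derivatives and local factorizations can be controlled, and that structural input is precisely what Lemma 2.4 of \cite{borges_con_charac_mvsp} supplies. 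Without invoking that characterization (or reproving it), part (c) does not follow from the counting framework you propose.
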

\begin{proof}
The polynomial ${f}(x)$ is an MVSP with value set $\fq$  by definition. This proves the first item. Items (b) and (c) are just a restatement of Lemma 2.4 in \cite{borges_con_charac_mvsp}.
\end{proof}

This result provides more evidence that the curves defined by polynomials in $\W$ may play an important role in the theory of curves over finite fields. Not incidentally, in \cite{borges_con_charac_mvsp} we extended the characterization of \mvsps\ initiated in \cite{carlitz_pol_minimal_61, mills_pol_minimal_64}, and provided  a  construction of \mvsps\ over $\fqn$ with a given set of values. More importantly, the results  in \cite{borges_con_charac_mvsp} allow us to describe the elements of $\W$ very explicitly. Thus combining the results in \cite{borges_con_charac_mvsp} and \cite{Borges} we obtain a large source of curves that (potentially) have many rational points. Our goal in this section is to consider a large set of polynomials $f(x)$ in $\W$ for which we can prove that the associated $\X$  is absolutely irreducible, and compute its  number of $\fqn$ rational points and the genus of its non-singular projective model. But before reaching such goal, we take a necessary detour to describe the elements of $\W$ more explicitly.

\begin{definition}\label{def:rtuple}
\begin{itemize}
 \item  $t\geq 0$  is an integer.
 \item $\overline{r}=(r_0,r_1,\ldots, r_{t})$ is a strictly increasing $(t+1)$-uple of  integers.
 \item $r_0=0$ and $r_{t+1}=n$.  
\item $T_n(x)=x+x^q+\ldots +x^{q^{n-1}}$ is the \emph{trace polynomial}.
\end{itemize}
\end{definition}
As an extension of \cite[Theorem 4.7]{borges_con_charac_mvsp}, one can show   that any element of $\W$ is an $\fqn$-linear combination of polynomials of the form
\begin{equation}\label{eq:f_definition}
\tilde{f}_{\overline{r}}(x):=T_n\left(x^{1+q^{r_1}+q^{r_2}+\cdots+q^{r_t}}\right) \mod (x^{q^n}-x),
\end{equation} 
see \cite[Theorem 2.5]{Borges} for more details. In fact, we can be more precise: the set   $\mathcal{W} \cup \F_q$  is an $\F_q$-vector space of dimension $2^{n}$  for which an explicit set of generators can be computed (see  \cite[Theorem 4.8]{borges_con_charac_mvsp}).  Notice that  $\W$ can be quite large since $\#\W=q^{2^n}-q$. Therefore, we expect that the study of the curves $\X$ associated to a general polynomial $f(x)\in\W$   to be computationally hard. For that reason in what follows we restrict our attention  only to those polynomials in $\W$  defined by \eqref{eq:f_definition}.  As we will see, even in this particular case the required computations  are quite involved.



%

\subsection{The polynomials}

Although the right-hand side of  \eqref{eq:f_definition} is a convenient way of representing the polynomials that generate $\W$,  it is not well-suited for computations and  for our construction of a generalization of the Hermitian curve. For instance, if we specialize to the case where $\overline{r}=(0,1,2,\ldots,n-1)$ then  $f_{\bar{r}}(x)=nx^{1+q+q^2+\cdots+q^{n-1}}$ is equal to zero if $p\mid n$. In general, it is not clear from \eqref{eq:f_definition} what the degree of $\tilde{f}_{\bar{r}}(x)$ is, or even if $\tilde{f}_{\bar{r}}(x)$ is non-zero. As we prove below, $\tilde{f}_{\bar{r}}(x)=\eta f_{\bar{r}}(x)$ for some non-zero integer $\eta$ and some  monic polynomial $f_{\bar{r}}(x)$. It is not only more convenient to work with the latter polynomials, but additionally there is no loss in doing so. When $\tilde{f}_{\bar{r}}(x)\neq 0$, most of  its interesting properties, including the ones discussed in Lemma \ref{lem:garcia_prop}, are also satisfied by ${f}_{\bar{r}}(x)$. The aim of this section is to rewrite ${f}_{\bar{r}}(x)$ in a form that is more suitable for  computations.

Hereafter, we fix a $(t+1)$-uple $\bar{r}$ as in Definition \ref{def:rtuple} and write $\tilde{f}(x)=\tilde{f}_{\overline{r}}(x)$, for simplicity of notation.

\begin{definition}\label{def1} 
For each $i \in \{0,\ldots,t\}$, we define
\begin{itemize}
\item[(1)]$\delta_{i}=r_{t+1-i}-r_{t-i}$ and $\delta=\min \{\delta_j: j=0,\ldots,t\}$.
\item[(2)]  
$f_{0}(x):=x^{1+q^{r_1}+q^{r_2}+\cdots+q^{r_t}}$ and  let $f_{i}(x):=f_{i-1}(x)^{\delta_{i-1}} \mod (x^{q^n}-x)$, for $1\leq i \leq t$.
\item[(3)] $I_i:=\{j:   f_j(x)=f_i(x)\}$ and $I:=\{\min{I_i}: i=0,\ldots, t\}$.

\item[(4)] $\eta_i=\#I_i$ and $\eta=\eta_0$.

\item[(5)] ${f}(x)={f}_{\bar{r}}(x):=\sum\limits_{e\in I}{T_{\delta_{e}}(f_{e}(x))}$.
\end{itemize}
\end{definition}


\begin{lemma}\label{lem:about_f}
 Following the notation in Definition \ref{def1}, we have that 
\begin{itemize} 
\item[(a)] For $i\in\{1,\ldots,t\}$,
\begin{eqnarray*} 
 \deg f_i(x)& = &1+q^{\delta_{i-1}}+\cdots+q^{\delta_{i-1}+\delta_{i-2}+\cdots+\delta_{0}}(1+q^{r_1}+\cdots+q^{r_{t-i}})\\
& = &1+q^{r_{t-i+2}-r_{t-i+1}}+\cdots+q^{r_{t+1}-r_{t-i+1}}(1+q^{r_1}+\cdots+q^{r_{t-i}}).
\end{eqnarray*}
%
%
%
%
\item[(b)] $\eta=\eta_0=\cdots=\eta_{t}$ and $\eta$ is a divisor of $n$.

\item[(c)]  $\tilde{f}(x)=\eta\, \ftil(x)$.


%
\end{itemize} 
\end{lemma}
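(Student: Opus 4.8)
The plan is to reduce the entire lemma to the combinatorics of cyclic shifts on $\zz/n\zz$. I would write $S=\{r_0,\dots,r_t\}\subseteq\{0,1,\dots,n-1\}$ and, for a subset $A\subseteq\zz/n\zz$, abbreviate $x^{[A]}:=x^{\sum_{a\in A}q^{a}}$, exponents of $q$ being taken in $\{0,\dots,n-1\}$. Since $x^{q^{n}}\equiv x \pmod{x^{q^{n}}-x}$, reduction modulo $x^{q^{n}}-x$ replaces each $x^{q^{a}}$ by $x^{q^{a\bmod n}}$; as $1+q^{r_1}+\cdots+q^{r_t}=\sum_{s\in S}q^s$, this reduces $\tilde f$ termwise to
\[
\tilde f(x)=T_n\!\left(x^{[S]}\right)\bmod (x^{q^{n}}-x)=\sum_{j=0}^{n-1}x^{[S+j]},
\]
a sum over all $n$ cyclic shifts of $S$. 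The recursion defining the $f_i$ (raising $f_{i-1}$ to the $q^{\delta_{i-1}}$ power and reducing, i.e. applying Frobenius $\delta_{i-1}$ times) likewise shifts exponent sets, so an easy induction gives $f_i(x)=x^{[S-\rho_i]}$, where $\rho_0=0$ and $\rho_i=r_{t+1-i}$ for $1\le i\le t$. Here $\{\rho_0,\dots,\rho_t\}=S$ and $\delta_i\equiv\rho_i-\rho_{i+1}\pmod n$ (with $\rho_{t+1}:=\rho_0$), so each $\delta_i$ is the length of the gap between two cyclically consecutive elements of $S$.

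Part (a) is then pure bookkeeping. Expanding $\deg f_i=\sum_{s\in S}q^{(s-\rho_i)\bmod n}$ and separating the indices $s\ge r_{t+1-i}$ (no wrap, giving $1+q^{r_{t-i+2}-r_{t-i+1}}+\cdots+q^{r_t-r_{t-i+1}}$) from the indices $s<r_{t+1-i}$ (which wrap, contributing the factor $q^{n-r_{t-i+1}}(1+q^{r_1}+\cdots+q^{r_{t-i}})$) reproduces both displayed expressions, after the substitutions $n=r_{t+1}$ and $\delta_{i-1}+\cdots+\delta_0=n-r_{t-i+1}$.

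For (b), let $H=\{h\in\zz/n\zz: S+h=S\}$ be the stabilizer of $S$ under shifts. It is a subgroup of the cyclic group $\zz/n\zz$, so $|H|$ divides $n$ and $S$ is a union of $H$-cosets, i.e. $S$ is periodic of period $p:=n/|H|$. Since $f_j=f_i$ iff $S-\rho_j=S-\rho_i$ iff $\rho_j\equiv\rho_i\pmod{H}$, and $j\mapsto\rho_j$ is a bijection onto $S$, I get $\eta_i=|S\cap(\rho_i+H)|=|H|$ for every $i$; hence $\eta_0=\cdots=\eta_t=|H|$ and $\eta=|H|\mid n$.

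The heart of the argument is (c). I would first establish the clean identity $\sum_{e=0}^{t}T_{\delta_e}\!\left(f_e(x)\right)=\tilde f(x)$: indeed $T_{\delta_e}(f_e)=\sum_{k=0}^{\delta_e-1}x^{[S-\rho_e+k]}$ runs over the shifts in the interval $[-\rho_e,-\rho_{e+1})$ of $\zz/n\zz$ (using $\delta_e=\rho_e-\rho_{e+1}$), and as $e$ runs through $0,\dots,t$ these consecutive intervals tile $\zz/n\zz$ exactly once, their lengths summing to $\sum_e\delta_e=n$, so the total is $\sum_{j=0}^{n-1}x^{[S+j]}=\tilde f$. It then remains to group the left-hand side over the classes $I_i$. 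The step I expect to be the main obstacle to state carefully is that $\delta_e$ is constant on each class: if $e\in I_i$ then $\rho_e\equiv\rho_i\pmod{p}$, and because $S$ is $p$-periodic the gap following a given point of $S$ is invariant under translation by $p$, whence $\delta_e=\delta_i$ and therefore $T_{\delta_e}(f_e)=T_{\delta_i}(f_i)$. Using $\{0,\dots,t\}=\bigsqcup_{i\in I}I_i$ together with $\eta_i=\eta$ from (b), I conclude
\[
\tilde f=\sum_{i\in I}\sum_{e\in I_i}T_{\delta_e}(f_e)=\sum_{i\in I}\eta\,T_{\delta_i}(f_i)=\eta\sum_{i\in I}T_{\delta_i}(f_i)=\eta f,
\]
which is exactly (c). As a by-product, each distinct shift in the orbit of $S$ occurs once in $f$, so $f$ is monic, matching the normalization in the statement.
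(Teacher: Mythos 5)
Your proof is correct and takes essentially the same route as the paper's: the cyclic-shift action on exponent subsets of $\mathbb{Z}/n\mathbb{Z}$ is precisely the paper's action of $\mathrm{Gal}(\mathbb{F}_{q^n}|\mathbb{F}_q)$ on the monomials of $\tilde f$, your stabilizer $H$ is its stabilizer $G_0$, and your tiling identity $\sum_{e=0}^{t}T_{\delta_e}(f_e)=\tilde f$ is the paper's key intermediate identity (obtained there instead by counting the $\delta_i$ monomials $f_i^{q^j}$ of degree $<q^n$). The one place you go beyond the paper is in checking explicitly that $\delta_e$ is constant on each class $I_i$ --- a fact genuinely needed to collapse $\sum_{e=0}^{t}T_{\delta_e}(f_e)$ to $\eta\sum_{e\in I}T_{\delta_e}(f_e)$, which the paper dismisses as an ``easy consequence'' --- though you should rename the period $n/|H|$ of $S$, since the letter $p$ already denotes the characteristic.
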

\begin{proof}
 The first equality in part (a) follows by induction on $i$. The second is a consequence of the fact that a sum of the form $\sum\limits_{k=i}^{j} \delta_{k \mod (t+1)}$  reduces to a sum of one, two or three terms. We leave the details to the reader.
 
 To prove part (b) first note that from part (a)
\begin{equation}\label{eq:deg_fi}
\deg f_i(x)=q^{n-\delta_i}+ \underbrace{\qquad  \cdots\qquad }_{\text{powers of $q$ $< q^{n-\delta_i}$}},
\end{equation}
 for any $i\in  \{0,\cdots,t\}$. This implies that for $0\leq j\leq \delta_i-1$ we have $q^{j}\deg f_i(x)<q^n$. Therefore $T_{\delta_i}(f_i(x))$ contains  $\delta_i$ terms of degree $<q^n$. By the definition of the $f_i$'s, the polynomial $\sum\limits_{i=0}^{t}{T_{\delta_i}(f_i(x))}$
contains  the monomial $f_{0}(x)^{q^j}\mod (x^{q^n}-x)$, for $0\leq j< \sum\limits_{i=0}^{t} \delta_i$. Since $n=\sum\limits_{i=0}^{t} \delta_i$, it follows that 
\begin{equation}\label{eq:f_sum_tr}
\tilde{ f}(x)= \sum\limits_{i=0}^{t}{T_{\delta_i}(f_i(x))}. 
\end{equation}
If $m$ is a monomial of $\tilde{f}(x)$, then the operation $m^{q^k}\mod (x^{q^n}-x)$ defines an action of $G:=\gal(\fqn|\fq)$ on the set of monomials of $\tilde{f}(x)$ (cf. \cite[Proposition 4.2]{borges_con_charac_mvsp}). Under this action, we identify the  stabilizer of $f_i(x)$ with
$$
G_i:=\{k \mod n:f_i(x)=f_0(x)^{q^k} \mod (x^{q^n}-x)\}.
$$ 
 Now observe that \eqref{eq:f_sum_tr} implies that the only monomials of $f(x)$ with degree coprime to $p$ are the $f_i(x)$'s. As a consequence $G_i\subseteq \{\delta_0,\delta_0+\delta_1,\ldots,\sum_{j=0}^t\delta_j\}$. Therefore, $G_i=I_i$ and $\eta_i=\#G_i$. This shows that  $\eta_i$ divides $n=\#G$.  Since any $f_i(x)$ is, by definition, on the orbit of $f_0(x)$ by the action of $G$, we have that $\eta=\eta_i$ for all $i\in\{1, \ldots,t\}$. This concludes the proof of part (b). 

Part (c) is an easy consequence of \eqref{eq:f_sum_tr} and part (b).
\end{proof}


Observe that the above lemma shows  that  $f(x)$ is not identically zero and  $\deg f(x)=\max\{{\deg T_{\delta_{e}}(f_{e}(x))}:e\in I\}$. The next definition and lemma provide a more explicit way of computing such degree.

\begin{definition}\label{def2}
For $i,j\in\{0,\ldots,t\}$ we  define
\begin{itemize}
 \item $\Delta_{i,j}:=\sum\limits_{\lambda=0}^{j}\delta_{(i-1-\lambda) \mod (t+1)}$.
 \item $S_i:=(\Delta_{i,0},\ldots,\Delta_{i,t})$.
\end{itemize}
\end{definition}

\begin{remark}\label{rem:deg_Tifi}
Notice that with these definitions we can write $\deg f_i(x)=\sum\limits_{j=0}^{t}q^{\Delta_{i,j}-\delta_i}$ and $\deg T_{\delta_i}(f_i(x))=\sum\limits_{j=0}^{t}q^{\Delta_{i,j}-1}$. 
\end{remark}

\begin{corollary}\label{cor:deg_f_delta}
Let $M$ be such that $S_M$ is the largest sequence, in the lexicographic order, amongst the distinct sequences $\{S_e: e \in I\}$. Then $\deg f(x)=\sum\limits_{j=0}^{t}q^{\Delta_{M,j}-1}$. 
\end{corollary}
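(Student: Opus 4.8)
The plan is to reduce the computation of $\deg f(x)$ to a comparison between finitely many integers, each written transparently in base $q$. The starting point is already available: by the observation following Lemma \ref{lem:about_f} we have $\deg f(x)=\max\{\deg T_{\delta_e}(f_e(x)):e\in I\}$, and Remark \ref{rem:deg_Tifi} rewrites each summand as $\deg T_{\delta_e}(f_e(x))=\sum_{j=0}^t q^{\Delta_{e,j}-1}$. Thus everything comes down to deciding which index $e\in I$ maximizes the integer $D_e:=\sum_{j=0}^t q^{\Delta_{e,j}-1}$, and the corollary simply records that this index is the one whose sequence $S_e$ is extremal.

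First I would record two structural facts about the exponents, both immediate from Definitions \ref{def1} and \ref{def2}. For fixed $e$, the numbers $\Delta_{e,j}$ are the partial sums of the strictly positive integers $\delta_{(e-1)\bmod(t+1)},\delta_{(e-2)\bmod(t+1)},\dots$ read cyclically, so $\Delta_{e,0}<\Delta_{e,1}<\cdots<\Delta_{e,t}$; and since $\lambda\mapsto(e-1-\lambda)\bmod(t+1)$ runs over every residue as $\lambda=0,\dots,t$, the telescoping identity $\sum_i\delta_i=r_{t+1}-r_0=n$ forces $\Delta_{e,t}=n$ for every $e$. Consequently each $D_e$ is a sum of $t+1$ \emph{distinct} powers of $q$; equivalently, its base-$q$ expansion is a $0/1$ string with ones exactly in positions $\Delta_{e,0}-1,\dots,\Delta_{e,t}-1=n-1$. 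In particular all the $D_e$ share the leading power $q^{n-1}$.

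Next I would compare two such integers $D_e$ and $D_{e'}$ directly from their base-$q$ digits. Let $j^\ast$ be the largest index at which $\Delta_{e,j}$ and $\Delta_{e',j}$ differ, say $\Delta_{e,j^\ast}>\Delta_{e',j^\ast}$. For $j>j^\ast$ the two exponents agree, so those digits coincide; and because both sequences are strictly increasing of the same length, a short check shows that $D_e$ has a one in position $\Delta_{e,j^\ast}-1$ while $D_{e'}$ has a zero there, and that this is the highest position at which the two digit strings differ. Hence $D_e>D_{e'}$. In other words, the numerical order of the $D_e$ coincides with the order of the sequences $S_e$ compared coordinatewise; since the top coordinate $\Delta_{e,t}=n$ is common to all, the maximum is attained precisely at the index $M$ singled out in the statement, whence $\deg f(x)=D_M=\sum_{j=0}^t q^{\Delta_{M,j}-1}$.

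The two structural facts are routine. The one step that needs genuine care is the digit comparison in the previous paragraph: one must verify that the first discrepancy between the expansions really does decide the inequality, with the correct orientation of the comparison and with no interference from carries. The no-carry point is clean here, because within a single $D_e$ all the powers of $q$ are distinct; the comparison is therefore governed entirely by the location of the top differing one. Making this precise — in effect the small lemma that for two strictly increasing integer sequences of equal length the induced sums $\sum_j q^{\Delta_{e,j}-1}$ are ordered by comparing the sequences from their most significant entry — is the heart of the argument, and it is exactly what identifies $M$ with the extremal sequence $S_M$.
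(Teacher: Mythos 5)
Your argument is correct and is essentially the paper's own proof written out in full: the paper's one--line justification (``a consequence of Remark \ref{rem:deg_Tifi} and the fact that the entries of the sequence $S_i$ are increasing'') is precisely your observation that each candidate degree $D_e=\sum_{j}q^{\Delta_{e,j}-1}$ is a sum of $t+1$ \emph{distinct} powers of $q$, so that the maximum over $e\in I$ is decided by the highest base-$q$ digit at which two of these integers differ; your digit-comparison lemma carries that out correctly. The one point that needs care is the direction of the comparison. What your lemma actually establishes is that the $D_e$ are ordered by comparing the sequences $S_e$ starting from their largest (last) entry $\Delta_{e,t}=n$ and working downward; this is \emph{not} the usual first-coordinate-first lexicographic order on $(\Delta_{e,0},\dots,\Delta_{e,t})$, and the two orders can genuinely disagree on the sequences that occur here. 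For instance, for $\bar r=(0,2,5)$ and $n=6$ the three sequences arising are $(1,3,6)$, $(2,5,6)$ and $(3,4,6)$: the front-first lexicographic maximum is $(3,4,6)$, whereas the numerical maximum of $\sum_j q^{\Delta_j-1}$ is attained by $(2,5,6)$, since $q^4+q>q^3+q^2$ for $q\ge 2$. So the corollary's ``largest in the lexicographic order'' must be read the way you implicitly read it --- most significant entry first --- for the statement, and for your final identification of the maximizer with $S_M$, to be valid; under the standard front-first convention the claim would be false. With that convention fixed, your proof is complete and coincides with the paper's approach.
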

\begin{proof}
 This is a consequence of Remark \ref{rem:deg_Tifi} and the fact that the entries of the sequence $S_i$ are increasing.
\end{proof}

The next result presents a case where we can compute $\deg f(x)$ exactly.

\begin{corollary}\label{cor:dec_seq_deg}
 If $\delta_0 \leq \cdots \leq \delta_{t}$, then $\deg f(x)=q^{r_1-1}+\cdots+q^{r_t-1}+q^{n-1}$.
\end{corollary}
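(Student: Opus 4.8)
The plan is to invoke Corollary \ref{cor:deg_f_delta}, which reduces the computation of $\deg f(x)$ to identifying the lexicographically largest sequence $S_M$ among $\{S_e : e\in I\}$ and then evaluating $\sum_{j=0}^t q^{\Delta_{M,j}-1}$. So the real task under the hypothesis $\delta_0\leq\cdots\leq\delta_t$ is twofold: first, determine which index $M$ maximizes $S_M$; second, show that for this $M$ the resulting exponents $\{\Delta_{M,j}-1\}_{j=0}^t$ are precisely $\{r_1-1,\ldots,r_t-1,n-1\}$.

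First I would unwind the definition $\Delta_{i,j}=\sum_{\lambda=0}^j \delta_{(i-1-\lambda)\bmod(t+1)}$ to understand the entries of each $S_i$. The sequence $S_i$ consists of partial sums of the $\delta$'s read cyclically \emph{backwards} starting from index $i-1$. Since $S_i$ is increasing in $j$ (the $\delta$'s are positive), the lexicographic comparison of two sequences $S_i,S_{i'}$ is governed by their first entries $\Delta_{i,0}=\delta_{(i-1)\bmod(t+1)}$, with ties broken by subsequent entries. Under the monotonicity hypothesis $\delta_0\leq\cdots\leq\delta_t$, the largest single $\delta$ is $\delta_t$, which occurs as the first entry $\Delta_{i,0}$ exactly when $(i-1)\bmod(t+1)=t$, i.e. $i=0$. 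I therefore expect $M=0$ to be the maximizing index (modulo the subtlety that $M$ ranges only over $e\in I$, the distinct-orbit representatives; I would need to confirm $0\in I$ and that no other representative ties with $S_0$ in a way that changes the outcome — but since $\delta_t$ is strictly the largest among candidates unless repeated, and repetitions of the maximal $\delta$ would force equal sequences anyway, the degree value is unaffected).

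Next I would compute the exponents for $M=0$ explicitly. We have $\Delta_{0,j}=\sum_{\lambda=0}^j \delta_{(-1-\lambda)\bmod(t+1)}=\sum_{\lambda=0}^j \delta_{t-\lambda}=\delta_t+\delta_{t-1}+\cdots+\delta_{t-j}$. Recalling from Definition \ref{def1} that $\delta_i=r_{t+1-i}-r_{t-i}$, these are differences of consecutive $r$'s, and the partial sum telescopes: $\Delta_{0,j}=\sum_{\lambda=0}^j(r_{t+1-(t-\lambda)}-r_{t-(t-\lambda)})=\sum_{\lambda=0}^j(r_{\lambda+1}-r_\lambda)=r_{j+1}-r_0=r_{j+1}$, using $r_0=0$. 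Hence $\Delta_{0,j}=r_{j+1}$ for $j=0,\ldots,t$, and as $j$ runs over $\{0,\ldots,t\}$ the value $r_{j+1}$ runs over $\{r_1,\ldots,r_{t+1}\}=\{r_1,\ldots,r_t,n\}$ since $r_{t+1}=n$. Substituting into Corollary \ref{cor:deg_f_delta} gives $\deg f(x)=\sum_{j=0}^t q^{\Delta_{0,j}-1}=q^{r_1-1}+\cdots+q^{r_t-1}+q^{n-1}$, as claimed.

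The main obstacle I anticipate is not the telescoping computation, which is mechanical, but rigorously justifying that $M=0$ (or any index achieving the maximum) produces the stated degree. One must verify that the lexicographic maximum is genuinely achieved by a sequence whose first entry is $\delta_t$, handle the possibility of ties when several $\delta_i$ equal the maximum, and confirm that the restriction to representatives $e\in I$ rather than all $e\in\{0,\ldots,t\}$ does not remove the maximizing index — indeed one should check that if the maximizing index were absorbed into an orbit, its representative still yields the same sequence $S_e$ up to the value of $\deg f$. I would address this by noting that the degree formula in Corollary \ref{cor:deg_f_delta} depends only on the multiset of sequences and the lexicographically largest one, so any index attaining the maximal first entry $\delta_t$ and agreeing with the telescoped partial sums gives the same answer; the monotonicity hypothesis guarantees at least one such index exists and is compatible with the orbit structure.
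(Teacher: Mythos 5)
Your proposal is correct and follows essentially the same route as the paper: invoke Corollary \ref{cor:deg_f_delta}, observe that under $\delta_0\leq\cdots\leq\delta_t$ the sequence $S_0$ is the lexicographically largest (and $0\in I$ automatically, since $0=\min I_0$), and telescope $\Delta_{0,j}=\sum_{\lambda=0}^{j}(r_{\lambda+1}-r_{\lambda})=r_{j+1}$. One parenthetical claim in your tie discussion is false --- repeated maximal $\delta$'s do \emph{not} force equal sequences (e.g.\ $(\delta_0,\delta_1,\delta_2)=(1,2,2)$ gives $S_0=(2,4,5)\neq(2,3,5)=S_2$) --- but the conclusion survives, and is cleanest to justify by noting that $\Delta_{0,j}$ is the sum of the $j+1$ largest $\delta$'s, so $S_0$ dominates every $S_i$ entrywise.
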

\begin{proof}
 Notice that $S_i=(\delta_{i-1\mod (t+1)},\ldots,n-\delta_i)$. The hypotheses on $\delta_i$ imply that $S_0$ is the largest sequence in lexicographic order. Then by  Corollary \ref{cor:deg_f_delta}, $\deg f(x)=\sum\limits_{j=0}^t q^{\sum_{i=0}^{j}\delta_{t-i}-1}=\sum\limits_{j=0}^t q^{r_{j+1}-1}$ as desired.  
\end{proof}

We end this section by proving a sequence of lemmata that is used  in the computation of the genus of  curves to be defined in the next section.

\begin{lemma}\label{lemaHelpH}  
If $m \in  \{0,\cdots,t\}$ is such that $\deg f_m(x)$ is maximal, that is 
$$
\deg f_m(x) =\max\{\deg f_i(x) : i=0,\ldots,t \},
$$ then $\delta_m=\delta$. Moreover, if  $\deg f_i(x)$ is not maximal then
$\deg f_m(x)> q^{\delta_i-\delta-1}\deg f_i(x)$.

\end{lemma}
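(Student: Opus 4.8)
The plan is to exploit the single structural fact recorded in \eqref{eq:deg_fi}, namely that $\deg f_i(x)=q^{n-\delta_i}+(\text{strictly smaller powers of }q)$, so that the magnitude of $\deg f_i(x)$ is governed almost entirely by its leading $q$-power $q^{n-\delta_i}$. First I would establish the elementary two-sided estimate
$$
q^{n-\delta_i}\le \deg f_i(x) < q^{n-\delta_i+1}.
$$
The left inequality is immediate, since $q^{n-\delta_i}$ is one of the monomials occurring in $\deg f_i(x)$. The right inequality is the only place any real care is needed: because $f_i(x)$ lies in the Frobenius orbit of the monomial $f_0(x)=x^{1+q^{r_1}+\cdots+q^{r_t}}$, its degree is a sum of \emph{distinct} powers of $q$, all of which are $\le q^{n-\delta_i}$; bounding this by the full geometric series $1+q+\cdots+q^{n-\delta_i}=\tfrac{q^{n-\delta_i+1}-1}{q-1}$ then yields the strict bound $\deg f_i(x)<q^{n-\delta_i+1}$ for every $q\ge 2$.

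With this estimate in hand, the first assertion $\delta_m=\delta$ follows by contradiction. Suppose $\delta_m>\delta$ and choose an index $j$ with $\delta_j=\delta$. Since $\delta_m\ge\delta+1$ gives $n-\delta_m+1\le n-\delta$, the estimate yields
$$
\deg f_m(x) < q^{n-\delta_m+1}\le q^{n-\delta}\le \deg f_j(x),
$$
contradicting the maximality of $\deg f_m(x)$. Hence maximality forces the leading $q$-power to be as large as possible, that is $\delta_m=\delta$.

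For the \emph{Moreover} part I would use $\delta_m=\delta$ together with the same estimate. On one side $\deg f_m(x)\ge q^{n-\delta}$, while on the other, for any index $i$,
$$
q^{\delta_i-\delta-1}\deg f_i(x) < q^{\delta_i-\delta-1}\cdot q^{n-\delta_i+1}=q^{n-\delta}.
$$
Chaining these gives $\deg f_m(x)\ge q^{n-\delta}>q^{\delta_i-\delta-1}\deg f_i(x)$, the desired strict inequality; in fact this chain holds for every $i$, so the non-maximality hypothesis is not essential. The main obstacle is therefore not the comparison arguments — which are short once the estimate is in place — but justifying the strict upper bound $\deg f_i(x)<q^{n-\delta_i+1}$: this is exactly where one must invoke that the exponents appearing in $\deg f_i(x)$ are \emph{distinct} residues modulo $n$, a consequence of $f_i(x)$ being a Frobenius image of $f_0(x)$, so that the geometric-series estimate genuinely applies.
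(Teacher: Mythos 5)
Your proof is correct and follows essentially the same route as the paper's: both arguments rest on the expansion \eqref{eq:deg_fi} of $\deg f_i(x)$ as a sum of distinct powers of $q$ with leading term $q^{n-\delta_i}$, yielding the two-sided bound $q^{n-\delta_i}\le \deg f_i(x)<q^{n-\delta_i+1}$ from which both assertions follow. The paper leaves the geometric-series estimate implicit, but the mechanism is identical.
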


\begin{proof}
From \eqref{eq:deg_fi}, it follows that $\deg f_m(x) \geq \deg f_i(x)$ implies $q^{n-\delta_m}\geq q^{n-\delta_i}$,  which gives $\delta_m\leq \delta_i$ for all $i\in  \{0,\ldots,t\}$.  This proves the first assertion. For the second one, since 
$$
q^{\delta_i-\delta-1}\deg f_i(x)=q^{n-\delta-1}+\underbrace{\qquad  \cdots\qquad }_{\text{powers of $q$ $< q^{n-\delta-1}$}},
$$
we have $q^{\delta_i-\delta-1}\deg f_i(x)<\deg f_m(x)$, which finishes the proof.
  \end{proof}

\begin{lemma}\label{Lema do f}
There exist polynomials $u(x)$ and $v(x)$ such that 
\begin{equation}
 f(x)=T_\delta(u(x))+v(x)^{q^\delta},
\end{equation}
$\deg u(x)\equiv 1\mod p $, and $\deg v(x)<\deg u(x)$. Moreover,  $\deg u(x)=\deg f_{M}(x)$ where $M$ is such that $S_M$ is the largest sequence, amongst the distinct sequences $\{S_e: e \in I\}$.
\end{lemma}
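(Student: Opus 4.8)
The plan is to decompose the sum $f(x)=\sum_{e\in I}T_{\delta_e}(f_e(x))$ according to whether the degree of $f_e(x)$ is maximal or not, and to read off the two pieces $T_\delta(u(x))$ and $v(x)^{q^\delta}$ from Lemma \ref{lemaHelpH}. By that lemma, the indices $m$ for which $\deg f_m(x)$ is maximal are exactly those with $\delta_m=\delta$, and by Corollary \ref{cor:deg_f_delta} together with Remark \ref{rem:deg_Tifi} the dominant term of $f(x)$ comes from the index $M$ realizing the lexicographically largest $S_M$. First I would set $u(x):=f_M(x)$, so that $\deg u(x)=\deg f_M(x)$ is the maximal degree and $\delta_M=\delta$; then $T_{\delta_M}(f_M(x))=T_\delta(u(x))$ is one summand of $f(x)$.

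Next I would collect everything else. Writing $f(x)=T_\delta(u(x))+R(x)$ with $R(x):=\sum_{e\in I,\,e\neq M}T_{\delta_e}(f_e(x))$, the claim is that $R(x)=v(x)^{q^\delta}$ for some $v(x)$ with $\deg v(x)<\deg u(x)$. The key observation is that every monomial appearing in $R(x)$ is a $q^\delta$-th power. Indeed, by the definition of $T_{\delta_e}$ the monomials of $T_{\delta_e}(f_e(x))$ are $f_e(x)^{q^j}$ for $0\le j<\delta_e$, reduced mod $x^{q^n}-x$; since $\delta_e\ge\delta$ for every $e$, and since for the non-maximal indices the first $\delta$ of these Frobenius powers can be shown to land with exponents divisible by $q^\delta$, one factors out $q^\delta$ from each. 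Concretely, I would argue that $R(x)=\bigl(\sum_{e}\sum_{j}f_e(x)^{q^{j-\delta}}\bigr)^{q^\delta}$ after verifying that the relevant reductions mod $x^{q^n}-x$ commute with extracting the $q^\delta$-th root, which holds because all the exponents involved are bounded by $q^n$ by the degree estimate \eqref{eq:deg_fi}. Setting $v(x)$ equal to this inner sum gives $R(x)=v(x)^{q^\delta}$.

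It then remains to verify the two numerical conditions. That $\deg u(x)\equiv 1\bmod p$ follows from part (a) of Lemma \ref{lem:about_f}, since $\deg f_M(x)=1+(\text{terms divisible by }q)$ and $q$ is a power of $p$. For the degree bound $\deg v(x)<\deg u(x)$, I would use the second assertion of Lemma \ref{lemaHelpH}: for any non-maximal index $i$ one has $\deg f_m(x)>q^{\delta_i-\delta-1}\deg f_i(x)$, so the degree of the $q^\delta$-th root of any monomial of $T_{\delta_i}(f_i(x))$, which is at most $q^{\delta_i-1-\delta}\deg f_i(x)$, stays strictly below $\deg u(x)=\deg f_m(x)$. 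Hence $\deg v(x)=\deg R(x)/q^\delta<\deg u(x)$, as required.

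The main obstacle will be the middle step: carefully justifying that every monomial of $R(x)$ is genuinely a $q^\delta$-th power, and that extracting the root is compatible with the reduction modulo $x^{q^n}-x$. The delicate point is that the reduction can lower exponents, so one must confirm via the bound \eqref{eq:deg_fi} that for the non-maximal summands all exponents before reduction already lie below $q^n$ for the first $\delta$ Frobenius twists, so no reduction actually occurs on those terms and the $q^\delta$-factorization is clean; the maximal index $M$ is precisely the one whose leading term is \emph{not} a $q^\delta$-th power, which is why it is split off into $T_\delta(u(x))$.
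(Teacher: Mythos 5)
Your decomposition is not the paper's, and the difficulty you flag at the end as ``the main obstacle'' is in fact fatal to your approach. You set $u(x):=f_M(x)$ and must then show that the remainder $R(x)=\sum_{e\in I,\,e\neq M}T_{\delta_e}(f_e(x))$ is a $q^\delta$-th power. It is not, whenever $I$ has more than one element: for $e\in I$ with $e\neq M$, the summand $T_{\delta_e}(f_e(x))$ contains the monomials $f_e(x)^{q^j}$ for $0\le j<\delta_e$, and since $\delta_e\ge\delta\ge 1$ the term with $j=0$ is $f_e(x)$ itself, whose degree is $\equiv 1\bmod p$ by Lemma \ref{lem:about_f}(a) and hence cannot be a $q^\delta$-th power. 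No reduction modulo $x^{q^n}-x$ intervenes for these low Frobenius powers, so your claim that ``the first $\delta$ of these Frobenius powers can be shown to land with exponents divisible by $q^\delta$'' is simply false. This already breaks in the paper's central example $\bar{r}=(0,r)$, where $I=\{0,1\}$, $M=0$, and $R(x)=T_{\delta_1}(x^{1+q^{n-r}})$ contains the monomial $x^{1+q^{n-r}}$ with exponent prime to $p$.

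The fix, which is what the paper does, is to apply the elementary identity $T_{\delta_e}(g)=T_\delta(g)+\left(T_{\delta_e-\delta}(g)\right)^{q^\delta}$ (valid because $\delta\le\delta_e$, with the convention $T_0\equiv 0$) to \emph{every} summand of $f(x)=\sum_{e\in I}T_{\delta_e}(f_e(x))$, not only to the dominant one. This forces $u(x)=\sum_{e\in I}f_e(x)$ and $v(x)=\sum_{e\in I}T_{\delta_e-\delta}(f_e(x))$: the first $\delta$ Frobenius powers of each $f_e$ are absorbed into $T_\delta(u(x))$, and only genuinely $q^\delta$-divisible terms remain in $v(x)^{q^\delta}$. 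Since the $f_e(x)$ are pairwise distinct monomials, there is no cancellation and $\deg u(x)=\max_e\deg f_e(x)=\deg f_M(x)\equiv 1\bmod p$. Your final step is the one part that survives: the bound $\deg v(x)<\deg u(x)$ does follow from the second assertion of Lemma \ref{lemaHelpH}, because the indices contributing to $v(x)$ are exactly those with $\delta_e\neq\delta$, which by that lemma are non-maximal, and the largest monomial they contribute to $v(x)$ has degree $q^{\delta_e-\delta-1}\deg f_e(x)<\deg f_M(x)$.
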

\begin{proof}
%

Using the minimality of $\delta$ we are able to write 
$$
f(x)= \sum\limits_{e\in I}T_{\delta_e}(f_e(x))={T_{\delta}\left( \sum\limits_{e\in I} f_e(x)\right)}+ \left(\sum\limits_{e\in I} T_{\delta_e-\delta}(f_i(x))\right)^{q^{\delta}},
$$
with the assumption that $T_0\equiv0$. We set $u(x):=\sum_{e\in I} f_e(x)$ and $v(x):=\sum_{e\in I}T_{\delta_e-\delta}(f_e(x))$. Note that $u(x)$ is not identically zero, since  $f_{e_1}(x)\neq f_{e_2}(x)$ for distinct $e_1,e_2\in I$. Considering $m$ as given in Lemma \ref{lemaHelpH}, then $\deg u(x)=\deg f_m(x)$. Observe that this also shows that $ \deg u(x)\equiv 1 \mod p$. 
As for the polynomial $v(x)$, one can see that
$$
\deg v(x)=\max\{q^{\delta_e-\delta-1} \deg f_e(x): \delta_e\neq \delta, e\in I\}.
$$ Now, since  $\delta_e\neq \delta$, it follows from Lemma \ref{lemaHelpH} that $\deg f_e(x)$ is not maximal, and  then that $q^{\delta_e-\delta-1} \deg f_e(x) \leq \deg f_{m}(x)$. Hence  $\deg v(x)<\deg u(x)$.
\end{proof}
\subsection{The curves}

Similar to the case of the curves $\GS$ defined by \eqref{GS}, our  generalization of the Hermitian curve is given by certain elementary abelian $p$-extensions of a rational function field $\bar{\ff}_{q}(x)$. Namely, we consider the curves  defined over $\fqn$ by
\begin{equation}\label{eq:curve_gen_hermitian}
y^{q^{n-1}}+\ldots +y^{q}+y=\ftil(x),
\end{equation}
where $f(x):=f_{\bar{r}}(x)$ is given by Definition \ref{def1}(5). The goal of this section is to compute the genus of a non-singular projective model $\mathcal{F}$ of  such curve and its number of rational points. 

The following two lemmas are used in the proof of the absolute irreducibility of the  curve given by \eqref{eq:curve_gen_hermitian}.
\begin{lemma}\label{lem:trace_coprime}
Let $\alpha \in \bar{\ff}_q$ and  suppose $\tra{m}{}(\alpha)=\tra{n}{}(\alpha)=0$. If $\gcd(m,n)=1$ then $\alpha=0$.
\end{lemma}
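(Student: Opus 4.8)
The plan is to exploit the fact that the trace polynomials $T_k$ are additive ($q$-linearized) and satisfy a simple composition law, which converts the hypothesis into a Euclidean-algorithm argument on the indices.

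First I would record the key functional equation. Since raising to the $q^a$-th power is a field homomorphism of $\bar{\ff}_q$ (the $a$-fold iterate of the $q$-Frobenius), for all nonnegative integers $a,b$ and every $\alpha$ one has
$$
T_{a+b}(\alpha)=\sum_{i=0}^{a+b-1}\alpha^{q^i}=\sum_{i=0}^{a-1}\alpha^{q^i}+\left(\sum_{j=0}^{b-1}\alpha^{q^j}\right)^{q^a}=T_a(\alpha)+T_b(\alpha)^{q^a}.
$$
Equivalently, for $b\geq a$ this reads $T_b(\alpha)=T_a(\alpha)+T_{b-a}(\alpha)^{q^a}$, where I use the convention $T_0\equiv 0$ (the empty sum) to keep the bookkeeping consistent.

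Then I would run the descent. Assume $T_m(\alpha)=T_n(\alpha)=0$ and, without loss of generality, $m\leq n$. Taking $a=m$, $b=n$ in the displayed identity gives
$$
0=T_n(\alpha)=T_m(\alpha)+T_{n-m}(\alpha)^{q^m}=T_{n-m}(\alpha)^{q^m}.
$$
Because $\bar{\ff}_q$ is a field and hence has no zero divisors, this forces $T_{n-m}(\alpha)=0$. Thus the simultaneous vanishing of $T_m(\alpha)$ and $T_n(\alpha)$ is preserved when the larger index is replaced by the difference of the two indices, which is exactly one step of the subtractive Euclidean algorithm.

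Iterating this reduction on the pair of indices drives it down to $(\gcd(m,n),0)$, and since $\gcd(m,n)=1$ I conclude $T_1(\alpha)=0$. As $T_1(x)=x$, this yields $\alpha=0$, as claimed. I expect no genuine obstacle here: the whole argument rests on the single additive identity above, and the only point deserving a moment's care is the base case $T_0\equiv 0$, which guarantees the recursion terminates cleanly once the index reaches $\gcd(m,n)$.
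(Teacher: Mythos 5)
Your proof is correct, and it takes a genuinely different route from the paper's. You run a subtractive Euclidean descent on the indices using the cocycle identity $T_{a+b}(\alpha)=T_a(\alpha)+T_b(\alpha)^{q^a}$: each step replaces the larger index by the difference while preserving the vanishing, and the convention $T_0\equiv 0$ lets the recursion bottom out at $T_{\gcd(m,n)}(\alpha)=T_1(\alpha)=\alpha=0$. The paper instead argues structurally: since $T_k(\alpha)^q-T_k(\alpha)=\alpha^{q^k}-\alpha$, the vanishing of $T_m(\alpha)$ and $T_n(\alpha)$ forces $\alpha\in\ff_{q^m}\cap\ff_{q^n}=\fq$ by coprimality, and on $\fq$ the map $T_k$ is multiplication by $k$, so $\alpha\neq 0$ would give $p\mid m$ and $p\mid n$, contradicting $\gcd(m,n)=1$. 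Your argument is more elementary and self-contained --- it needs only the additivity of Frobenius and never invokes the subfield lattice of $\bar{\ff}_q$ or any characteristic-$p$ divisibility --- while the paper's is shorter once one takes the standard facts about $\ff_{q^m}\cap\ff_{q^n}$ for granted. Both are complete; the only point worth stating explicitly in yours is the (easy) observation that the invariant ``$T_k(\alpha)=0$ for both current indices'' really is maintained at every step of the descent, including when one index reaches $0$.
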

\begin{proof}
Note that $\tra{m}{}(\alpha)=\tra{n}{}(\alpha)=0$ implies  $\alpha\in\ff_{q^m}\cap\ff_{q^n}=\fq$, by the coprimality condition. But then $0=\tra{m}{}(\alpha)=m\alpha$ and $0=\tra{n}{}(\alpha)=n\alpha$. Thus $\alpha=0$, otherwise we arrive at the contradiction $p\mid \gcd(m,n)=1$.
\end{proof}

\begin{lemma}\label{lm:NewPoly}
Let $n$ and $m$ be non-negative integers, with $n\geq m$. The polynomial in $\fq[x,y]$ defined by 
\begin{equation*}
S=S_{m,n}(x,y):= \begin{cases} 
0, & \text{ if }\quad m=0\\
\sum\limits_{i=0}^{m-1}y^{q^{n-1-i}}T_{m-i}(x), & \text{ if }\quad m>0\\
\end{cases} 
  \end{equation*}
satisfies $S^q-S=y^{q^{n}}T_{m+1}(x)-xT_{m+1}(y^{q^{n-m}}).$
\end{lemma}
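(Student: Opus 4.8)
The plan is to verify the identity by direct computation, exploiting that the $q$-power map is a ring endomorphism of $\fq[x,y]$ fixing $\fq$, so that $S^q$ is obtained simply by raising each monomial factor of $S$ to the $q$-th power. First I dispose of the case $m=0$, where $S=0$ and the right-hand side reads $y^{q^{n}}T_1(x)-xT_1(y^{q^{n}})=xy^{q^{n}}-xy^{q^{n}}=0$, since $T_1(z)=z$; so I may assume $m>0$. The single computational engine for the rest is the identity $T_k(x)^q=T_{k+1}(x)-x$, which is immediate from $T_k(x)=\sum_{j=0}^{k-1}x^{q^{j}}$.

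Applying the Frobenius term by term to $S=\sum_{i=0}^{m-1}y^{q^{n-1-i}}T_{m-i}(x)$ and invoking this identity gives
\[
S^q=\sum_{i=0}^{m-1}y^{q^{n-i}}\bigl(T_{m-i+1}(x)-x\bigr)=\sum_{i=0}^{m-1}y^{q^{n-i}}T_{m-i+1}(x)-x\sum_{i=0}^{m-1}y^{q^{n-i}}.
\]
I then set $A_i:=y^{q^{n-i}}T_{m+1-i}(x)$ and observe the shift relation $A_{i+1}=y^{q^{n-1-i}}T_{m-i}(x)$, which is exactly the $i$-th summand of $S$ itself. Consequently the first sum above, minus $S$, telescopes: $\sum_{i=0}^{m-1}A_i-\sum_{i=0}^{m-1}A_{i+1}=A_0-A_m=y^{q^{n}}T_{m+1}(x)-xy^{q^{n-m}}$, where I use $T_1(x)=x$ to evaluate the last term.

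It remains to match the leftover piece $-x\sum_{i=0}^{m-1}y^{q^{n-i}}$ against the target term $-xT_{m+1}(y^{q^{n-m}})$. Expanding $T_{m+1}(y^{q^{n-m}})=\sum_{l=0}^{m}y^{q^{n-m+l}}=\sum_{j=n-m}^{n}y^{q^{j}}$, whereas $\sum_{i=0}^{m-1}y^{q^{n-i}}=\sum_{j=n-m+1}^{n}y^{q^{j}}$ omits precisely the boundary term $y^{q^{n-m}}$; hence $-x\sum_{i=0}^{m-1}y^{q^{n-i}}=-xT_{m+1}(y^{q^{n-m}})+xy^{q^{n-m}}$. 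Adding this to the telescoped contribution, the two occurrences of $\pm\,xy^{q^{n-m}}$ cancel and I am left with $S^q-S=y^{q^{n}}T_{m+1}(x)-xT_{m+1}(y^{q^{n-m}})$, as claimed. The computation is entirely mechanical; the one genuinely useful observation --- and the place where a little care is needed --- is recognizing that $S$ coincides with the index-shifted family $\sum_i A_{i+1}$, so that $S^q-S$ collapses to the single boundary difference $A_0-A_m$ rather than to a sum of $m$ unrelated terms. Keeping the index ranges and the off-by-one boundary term $y^{q^{n-m}}$ straight is the only thing that could go wrong.
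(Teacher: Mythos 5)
Your proof is correct and follows essentially the same route as the paper's: apply the Frobenius term by term, use $T_k(x)^q=T_{k+1}(x)-x$, regroup the $-x$ terms into $-xT_{m+1}(y^{q^{n-m}})+xy^{q^{n-m}}$, and recognize the remaining sum as $S$ shifted by one index so that only the boundary terms survive. The only difference is cosmetic (your explicit $A_i$ telescoping notation versus the paper's written-out chain of equalities).
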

\begin{proof}
The case $m=0$ is trivial, thus we  assume that $m>0$. 

Since 
$S= y^{q^{n-1}}T_{m}(x)+ y^{q^{n-2}}T_{m-1}(x)+\cdots +y^{q^{n-m}}T_{1}(x),$ we have 
\begin{eqnarray*} 
S^q & = & y^{q^{n}}(T_{m+1}(x)-x)+y^{q^{n-1}}(T_{m}(x)-x)+ \cdots +y^{q^{n-m+1}}(T_{2}(x)-x)\\
& = & y^{q^{n}}T_{m+1}(x)+\cdots +y^{q^{n-m+1}}T_{2}(x)-xT_{m+1}(y^{q^{n-m}})+y^{q^{n-m}}x\\
& = & y^{q^{n}}T_{m+1}(x)+S-y^{q^{n-m}}T_{1}(x)-xT_{m+1}(y^{q^{n-m}})+y^{q^{n-m}}x\\
& = & y^{q^{n}}T_{m+1}(x)+S-xT_{m+1}(y^{q^{n-m}}),
\end{eqnarray*}
which gives $S^q-S$ as claimed.
\end{proof}


We are ready to prove the main result of this section.
\begin{theorem}\label{the:herm_gen} Let $n\geq 2$ be an integer, and $\delta$ be given as in Definition \ref{def1}. Let $\uu(x)$ be the polynomial over $\fqn$ given as in Lemma \ref{Lema do f}. If $\gcd(\delta,n)=1$ then the curve defined by \eqref{eq:curve_gen_hermitian}
is  irreducible and its non-singular projective model $\mathcal{F}$ has genus 
$$
g(\mathcal{F})=\dfrac{(q^{n-1}-1)(\deg \uu(x)-1)}{2},
$$ 
and  
$$
\#\mathcal{F}(\F_{q^n})=q^{2n-1}+1.
$$
\end{theorem}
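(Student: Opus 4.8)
The plan is to follow the Garcia--Stichtenoth strategy indicated in the introduction, realizing the curve $T_n(y)=f(x)$ as an elementary abelian $p$-extension $E=\bar{\ff}_q(x,y)$ of the rational function field $F=\bar{\ff}_q(x)$ and reading the genus off from Riemann--Hurwitz. Since $T_n$ is $\fq$-linear with kernel $W:=\{w:T_n(w)=0\}$, an $\fq$-vector space of dimension $n-1$, any $F$-automorphism of $E=F(y)$ sends $y$ to another root of $T_n(Y)-f$, i.e. to some $y+w$ with $w\in W\subseteq\bar{\ff}_q\subseteq F$; hence, once irreducibility is known, $E/F$ is Galois with $\gal(E/F)\cong W$, an elementary abelian $p$-group of order $q^{n-1}$. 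Because $f(x)$ is a polynomial, the cover $\mathcal{F}\to\pp^1$ given by $x$ is unramified away from $x=\infty$, and comparing the pole of the dominant term $y^{q^{n-1}}$ with that of $f$ shows it is totally ramified over $x=\infty$, with a single place $Q_\infty$ of ramification index $q^{n-1}$. Thus $2g(\mathcal{F})-2=-2q^{n-1}+d_{Q_\infty}$, and the whole problem reduces to (i) irreducibility and (ii) the different exponent $d_{Q_\infty}$.

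For irreducibility I would show $[E:F]=q^{n-1}$, equivalently that no nonzero $\F_p$-functional $\chi$ on $W$ produces an Artin--Schreier-trivial degree-$p$ subextension. Writing $f=T_\delta(u)+v^{q^\delta}$ as in Lemma \ref{Lema do f} and using the substitution identity of Lemma \ref{lm:NewPoly} to trade mixed monomials modulo $\wp$-images (note that $z^q-z=\wp(z+z^p+\cdots+z^{p^{s-1}})$ lies in the image of $w\mapsto w^p-w$), one should be able to put the defining relation into a normal form in which a would-be proper subextension forces the simultaneous vanishing $T_\delta(\alpha)=T_n(\alpha)=0$ of two traces whose lengths are coprime. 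By Lemma \ref{lem:trace_coprime} this can happen only for $\alpha=0$. This is exactly where the hypothesis $\gcd(\delta,n)=1$ is consumed, and it rules out the degenerate factorizations that would drop the degree below $q^{n-1}$.

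The core computation is $d_{Q_\infty}$. For each nonzero character $\chi$ of $W$ the associated degree-$p$ subextension is Artin--Schreier, $z^p-z=\beta_\chi$ with $\beta_\chi=\ell(y)^p-\ell(y)$ for an additive polynomial $\ell$ restricting to $\chi$ on $W$, and its conductor at $Q_\infty$ equals $m_\chi+1$, where $m_\chi$ is the pole order of $\beta_\chi$ after reduction modulo $\wp(F)$. Although $f$ itself has pole order $\deg f=q^{\delta-1}\deg u$, which is divisible by $p$, the $p$-power summands $u^q,\dots,u^{q^{\delta-1}},v^{q^\delta}$ can be absorbed into $\wp$-images — this being the purpose of the identity in Lemma \ref{lm:NewPoly} — leaving effective pole order $\deg u$, which is coprime to $p$ since $\deg u\equiv1\pmod p$ by Lemma \ref{Lema do f}. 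A valuation count at $Q_\infty$, using $v_{Q_\infty}(y)=-q^{\delta-1}\deg u$ and reducing $\ell$ modulo $T_n$ so that its $p$-degree is forced into the appropriate range, should then show that every nonzero $\chi$ has the same conductor $\deg u+1$. The conductor--discriminant formula gives $d_{Q_\infty}=(q^{n-1}-1)(\deg u+1)$, and Riemann--Hurwitz (with $g_F=0$ and $e=q^{n-1}$) yields $g(\mathcal{F})=(q^{n-1}-1)(\deg u-1)/2$.

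For the point count, Lemma \ref{lem:garcia_prop}(a) gives $f(\alpha)\in\fq$ for every $\alpha\in\fqn$, and since $T_n\colon\fqn\to\fq$ is surjective with every fiber of size $q^{n-1}$, each of the $q^n$ values of $\alpha$ contributes exactly $q^{n-1}$ points $y\in\fqn$; together with the single rational place $Q_\infty$ this gives $\#\mathcal{F}(\fqn)=q^n\cdot q^{n-1}+1=q^{2n-1}+1$. The main obstacle is the third paragraph: proving that the ramification is \emph{uniform}, i.e. that after $\wp$-reduction every character has conductor exactly $\deg u+1$ and no pole order collapses below $\deg u$. This is the delicate step, and it is precisely there — together with the irreducibility argument — that the full force of the decomposition in Lemma \ref{Lema do f}, the identity in Lemma \ref{lm:NewPoly}, and the coprimality $\gcd(\delta,n)=1$ must be brought to bear.
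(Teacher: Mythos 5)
Your proposal is correct and follows essentially the same route as the paper: both realize the curve as an elementary abelian $p$-extension of $\bar{\F}_q(x)$, reduce irreducibility to the implication $T_\delta(\alpha)=T_n(\alpha)=0\Rightarrow\alpha=0$ via the decomposition $f=T_\delta(u)+v^{q^\delta}$ of Lemma \ref{Lema do f} together with Lemmas \ref{lm:NewPoly} and \ref{lem:trace_coprime}, and obtain the genus from the fact that every degree-$p$ subextension has an Artin--Schreier generator of degree $\deg u\equiv 1\pmod p$ (the paper packages the last step through the Garcia--Stichtenoth genus formula for elementary abelian $p$-extensions rather than the conductor--discriminant formula, but the two computations are equivalent). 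The ``delicate step'' you flag is handled in the paper exactly as you anticipate: the reduced generators are the polynomials $-T_{\delta}(\alpha^{q^{n-\delta+1}})u(x)-\alpha^{q^{n-\delta}}v(x)$ with $T_n(\alpha)=0$, and the same coprimality argument that gives irreducibility also shows these all have degree $\deg u$, hence uniform conductor.
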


\begin{proof} 

Our proof relies on few facts of the Artin-Schreier theory. For more details, we refer to the introduction of \cite[Section 1]{Garcia_elementary_abelian_91} and the reference therein. 

Denote by $F=K(x)$ the rational function field over $K=\bar{\ff}_q$, and consider the field extension $E/F$, where $E=K(x,y)$ and $x$ and $y$ satisfy \eqref{eq:curve_gen_hermitian}.   We will prove that \eqref{eq:curve_gen_hermitian} is absolutely irreducible, by showing that $[E:F]=q^{n-1}$. For this matter, define $\wp:X \mapsto X^p-X$ to be the Artin-Schreier operator on $K(x)$ and 
$$
A:=\left\{-T_{\delta}(\alpha^{q^{n-\delta+1}})\uu(x)-\alpha^{q^{n-\delta}} \vv(x): T_n(\alpha)=0\right\} \subseteq K(x),
$$ 
where the polynomials $\uu(x)$ and $\vv(x)$ are such that $\ftil(x)=T_{\delta}(\uu(x))+\vv(x)^{q^{\delta}}$,  $\deg \uu(x)\equiv 1 \mod p$ and  $\deg \vv(x) < \deg \uu(x)$ (cf. Lemma \ref{Lema do f}).
It is clear that $A \subseteq K[x]$ is  an additive subgroup, and that $\gcd(\delta,n)=1$ and Lemma \ref{lem:trace_coprime} imply $|A|=q^{n-1}$.  We claim that 
$$
A\cap \wp (K(x))=\{0\}.
$$ 
In fact, suppose that  $ w:=-T_{\delta}(\alpha^{q^{n-\delta+1}})\uu(x)-\alpha^{q^{n-\delta}}v(x)$ is such that $w\in A\cap \wp (K(x))$. That is, the polynomial $X^p-X-w \in K[x][X]$ has a root in $K(x)$. Since UFDs are integrally closed, such a root must be a polynomial and then  $p\mid \deg w$.
On the other hand, since $\deg \uu(x)\equiv 1 \mod p$ and  $\deg \vv(x) < \deg \uu(x)$, it follows that $T_{\delta}(\alpha^{q^{n-\delta+1}})=0$, and then $T_{\delta}(\alpha)=0$. But since $T_{n}(\alpha)=0$ and $\gcd(\delta,n)=1$, Lemma \ref{lem:trace_coprime} implies $\alpha=0$  and consequently $w=0$. 

 From  Artin-Schreier theory,   $F(\wp^{-1}(A))$ is an abelian extension of $F$ of degree $|A|=q^{n-1}$. Therefore, $[E:F]=q^{n-1}$ and \eqref{eq:curve_gen_hermitian} is absolutely irreducible, as desired, if we can show that $F(\wp^{-1}(A))\subset E$.

In order to prove that $F(\wp^{-1}(A))\subset E$, we chose an element $\alpha\in K^*$ such that  $\tra{n}{}(\alpha)=0$, and  let  $S_{m,n}(x,y)$ be the polynomial given in Lemma \ref{lm:NewPoly}.  Consider   the element $z\in E$ given by
$$
z=S_{n-1,n-1}(\alpha,y)+S_{\delta-1,n}(\uu(x),\alpha)+T_{\delta}(\alpha^{q^{n-\delta}}\vv(x)).
$$
Lemma \ref{lm:NewPoly} gives
{\small\begin{eqnarray*} 
z^q-z & = & -\alpha T_{n}(y)+\alpha^{q^{n}}T_{\delta}(\uu(x))-\uu(x)T_{\delta}(\alpha^{q^{n-\delta+1}})+(\alpha^{q^{n-\delta}} \vv(x))^{q^{\delta}}-\alpha^{q^{n-\delta}} \vv(x)\\
& =&-\alpha f(x) +\alpha T_{\delta}(\uu(x)) +\alpha \vv(x)^{q^{\delta}}-T_{\delta}(\alpha^{q^{n-\delta+1}})\uu(x)-\alpha^{q^{n-\delta}} \vv(x) \\
& =&-T_{\delta}(\alpha^{q^{n-\delta+1}})\uu(x)-\alpha^{q^{n-\delta}} \vv(x).
\end{eqnarray*}}
Write $q=p^e$. If we let $z_t:=z^{p^{e-1}}+\cdots+z^p+z$ then $z_t\in E$ and
$$
z_t^p-z_t=z^q-z=-T_{\delta}(\alpha^{q^{n-\delta+1}})\uu(x)-\alpha^{q^{n-\delta}} \vv(x).
$$
Therefore, $E$ contains the splitting fields of  $X^p-X-a$, with $a \in A$ and consequently 
$F(\wp^{-1}(A))\subset E$ as desired.

We now use the fact that $E$ is an elementary abelian $p$-extension to compute its genus. It is  known from  Artin-Schreier theory that $E$ contains $\nu=(q^n-1)/(p-1)$  distinct degree-$p$  sub-extensions  $E_i \subseteq E$ given  by the splitting fields of $X^p-X=a$, with $a \in A^*$.
Note that since $p\nmid \deg \uu(x)$, the genus of each such extension is $g(E_i)=(p-1)(\deg \uu(x)-1)/2$. By \cite[Theorem 2.1]{Garcia_elementary_abelian_91}, the genus of the curve $\mathcal{F}$ can be computed by the formula
$$
g(\mathcal{F})=g(E)=\sum\limits_{i=1}^{\nu}  g(E_i) -\dfrac{p}{p-1}g(F).
$$
Note that in our case $g(F)=g(K(x))=0$. Therefore 
$$
g(\mathcal{F})=\frac{q^{n-1}-1}{p-1}\cdot \frac{(\deg \uu(x)-1)(p-1)}{2}=\frac{(\deg \uu(x)-1)(q^{n-1}-1)}{2}.
$$


To compute the number of $\fqn$-rational points of $\mathcal{F}$, we first note that the pole of $x$ is totally ramified in $E/F$ (see e.g. \cite[Theorem 3.3]{Doelalikar}). That gives us one rational point of $\mathcal{F}$, namely $Q=(0:1:0)$. Lemma \ref{lem:garcia_prop}(1) implies that  for any $\alpha\in\fqn$ we have  $\beta:=f(\alpha)\in\fq$. There are $q^{n-1}$ elements $\gamma\in\fqn$ satisfying
$$
\gamma^{q^{n-1}}+\cdots + \gamma^{q}+\gamma=\beta=\ftil(\alpha).
$$
This shows that  there are $q^n\cdot q^{n-1}+1=q^{2n-1}+1$ $\fqn$-rational points on $\mathcal{F}$.
\end{proof}

\begin{corollary}\label{coro do g} If $\delta_0 \leq \cdots \leq \delta_{t}$ then $\deg f(x)=q^{r_1-1}+\cdots+q^{r_t-1}+q^{n-1}$. Moreover,  the curve $\mathcal{F}$ has genus 
$$
g=\dfrac{1}{2} (q^{n-1}-1)\sum\limits_{i=0}^{t-1}q^{\sum_{j=0}^{i}\delta_{t-j}}.
$$
\end{corollary}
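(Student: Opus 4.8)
The plan is to deduce both assertions from the results already in hand, so that essentially no fresh computation is required. The first claim---that $\deg f(x)=q^{r_1-1}+\cdots+q^{r_t-1}+q^{n-1}$ under the hypothesis $\delta_0\leq\cdots\leq\delta_t$---is literally the statement of Corollary \ref{cor:dec_seq_deg}, so I would simply restate and invoke it. For the genus, the starting point is Theorem \ref{the:herm_gen}: invoking it (so that the curve is irreducible and the genus formula is available) gives
$$
g(\mathcal{F})=\frac{(q^{n-1}-1)(\deg \uu(x)-1)}{2}.
$$
Hence the entire problem reduces to computing $\deg \uu(x)$ under the assumption $\delta_0\leq\cdots\leq\delta_t$.

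The decisive step is to show that in this regime $\deg \uu(x)=\deg f_0(x)=1+q^{r_1}+\cdots+q^{r_t}$. By Lemma \ref{Lema do f}, $\deg \uu(x)=\deg f_M(x)$, where $M$ is the index for which $S_M$ is lexicographically largest among the distinct sequences $\{S_e:e\in I\}$. In the proof of Corollary \ref{cor:dec_seq_deg} it is shown that the hypothesis $\delta_0\leq\cdots\leq\delta_t$ forces $S_0$ to be the lexicographically largest of the sequences $S_i$; since $0=\min I_0\in I$, we may therefore take $M=0$. Thus $\deg \uu(x)=\deg f_0(x)$, and the latter equals $1+q^{r_1}+\cdots+q^{r_t}$ directly from the definition $f_0(x)=x^{1+q^{r_1}+\cdots+q^{r_t}}$ in Definition \ref{def1}(2). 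As a sanity check, one sees from \eqref{eq:deg_fi} that $f_0$ realizes the maximal degree among the $f_i$: its leading term is $q^{n-\delta_0}$ and $\delta_0=\delta$ is minimal, in agreement with Lemma \ref{lemaHelpH}.

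It then remains to repackage $\deg \uu(x)-1=q^{r_1}+\cdots+q^{r_t}$ into the stated form. Using $\delta_{t-j}=r_{t+1-(t-j)}-r_{t-(t-j)}=r_{j+1}-r_j$ and telescoping (recall $r_0=0$) gives $\sum_{j=0}^{i}\delta_{t-j}=r_{i+1}$, so that
$$
\deg \uu(x)-1=\sum_{i=0}^{t-1}q^{r_{i+1}}=\sum_{i=0}^{t-1}q^{\sum_{j=0}^{i}\delta_{t-j}}.
$$
Substituting into the genus formula from Theorem \ref{the:herm_gen} then produces
$$
g=\frac{1}{2}(q^{n-1}-1)\sum_{i=0}^{t-1}q^{\sum_{j=0}^{i}\delta_{t-j}},
$$
as required. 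The only genuinely delicate point is the identification $\deg \uu(x)=\deg f_0(x)$: everything hinges on the hypothesis $\delta_0\leq\cdots\leq\delta_t$ (equivalently, the gaps $r_{k+1}-r_k$ being nonincreasing) guaranteeing that the index $0$ is precisely the one selected by Lemma \ref{Lema do f}. Once that is pinned down, the rest is the elementary telescoping above.
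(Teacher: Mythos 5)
Your proof is correct and follows the paper's own route: cite Corollary \ref{cor:dec_seq_deg} for $\deg f(x)$, then feed $\deg \uu(x)$ from Lemma \ref{Lema do f} into the genus formula of Theorem \ref{the:herm_gen}. You merely make explicit what the paper leaves implicit (the identification $M=0$, hence $\deg\uu(x)=\deg f_0(x)=1+q^{r_1}+\cdots+q^{r_t}$, and the telescoping $\sum_{j=0}^{i}\delta_{t-j}=r_{i+1}$), and in doing so you correctly use $\deg\uu(x)-1=\sum_{i=0}^{t-1}q^{\sum_{j=0}^{i}\delta_{t-j}}$ where the paper's one-line proof states the same identity for $\deg\uu(x)$ itself, a harmless off-by-one slip on its part.
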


\begin{proof} See Corollary \ref{cor:dec_seq_deg} for the computation of $\deg f(x)$. From Theorem \ref{the:herm_gen}, the formula for the genus  will follow if we can show that $\deg u(x)=\sum\limits_{i=0}^{t-1}q^{\sum_{j=0}^{i}\delta_{t-j}}$. But the latter equality  is just an immediate consequence of  Lemma \ref{Lema do f} in the case $\delta_0 \leq \cdots \leq \delta_{t}$. This finishes the proof.
\end{proof}


\section{An alternative generalization of the Hermitian curve and its properties}\label{sec:many_pnts}

\subsection{Curves with many rational points}
As discussed in the introduction and the references therein, the construction of curves with large ratio $N/g$ is usually very challenging and have applications to  coding theory. In the previous section we have proved that our generalization $\mathcal{F}$ of the Hermitian  curve has $N=q^{2n-1}+1$ $\F_{q^n}$-rational points. Their genus $g$, however, is directly proportional to $\deg u(x)=\mathcal{O}(q^{n-\delta})$, where $\delta=\min\{\delta_i: i=0,\ldots,t\}$. Therefore the largest ratio $N/g$ will be achieved when $\delta$ is large compared to $n$. Clearly the best case scenario is when $ \delta \approx n/2$, and that can only happen when $t=1$. Such conditions lead us to highlight  the following particular case of the curves discussed in the previous section.

Let $n \geq 2$ be an integer and define $r=r(n)\geq n/2$ as being 
the smallest   integer such that $\gcd(n,r)=1$, i.e.,
   \begin{equation}\label{r1}
r= \begin{cases} 
1, & \text{ if }\quad n=2\\
n/2+1, & \text{ if }\quad n\equiv0\pmod4\\
n/2+2, & \text{ if }\quad n\geq 6,\, n\equiv2\pmod4\\
(n+1)/2, & \text{ if }\quad n \text{ is odd}\, .
\end{cases} 
  \end{equation}

 Let  $\mathcal{H}$ be the projective plane curve defined by the affine equation
\begin{equation}\label{eq:curve_many_pnts}
y^{q^{n-1}}+\ldots +y^{q}+y=f_{\bar{r}}(x),
\end{equation}
where $\bar{r}=(0,r)$ and $f_{\bar{r}}(x)$ is given as in Definition \ref{def1}(5). 
 
\begin{proposition}\label{prop:curve_many_pnts}

\begin{enumerate} If $\HH$ is as defined above, then:
 \item  $\mathcal{H}$ has 
degree $q^{n-1}+q^{r-1}$, genus $q^r(q^{n-1}-1)/2$ 
and its number of $\F_{q^n}$-rational points is $q^{2n-1}+1$.
\item  $\HH$ has 
just one point  at infinity, namely  $Q=(0:1:0)$, which is 
also its only singular point whenever $n\geq 3$.
\item  Furthermore, the 
curve is ${q^n}$-Frobenius nonclassical. 
\end{enumerate}

   \end{proposition}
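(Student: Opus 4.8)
The three assertions draw on different tools, so the plan is to treat them in turn, reserving most of the work for part (3).

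For part (1) I would first unwind Definition \ref{def1} in the case $\bar r=(0,r)$, where $t=1$: here $\delta_0=n-r$, $\delta_1=r$, and $\delta=\min\{n-r,r\}=n-r$ since $r\ge n/2$. Two hypotheses must be verified before invoking Section \ref{sec:curve_mvsp}. The ordering $\delta_0\le\delta_1$ is immediate from $r\ge n/2$, and $\gcd(\delta,n)=\gcd(n-r,n)=\gcd(r,n)=1$ holds by the defining property \eqref{r1} of $r$. Granting these, Corollary \ref{cor:dec_seq_deg} gives $\deg f_{\bar r}=q^{r-1}+q^{n-1}$, and since this exceeds the $y$-degree $q^{n-1}$ it is also the degree of the plane curve $\HH$. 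The genus is then Corollary \ref{coro do g} specialized to $t=1$, where the sum collapses to the single term $q^{\delta_1}=q^r$ and yields $g=q^r(q^{n-1}-1)/2$; the point count $\#\HH(\fqn)=q^{2n-1}+1$ is exactly Theorem \ref{the:herm_gen}.

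For part (2) I would homogenize \eqref{eq:curve_many_pnts} to $P(x,y,z)=\sum_a c_a x^a z^{d-a}-\sum_{k=0}^{n-1}y^{q^k}z^{d-q^k}$, where $d=\deg f_{\bar r}$ and $f_{\bar r}$ is monic. Setting $z=0$ leaves $x^d=0$, since every $y$-monomial carries a positive power of $z$ (because $d>q^{n-1}\ge q^k$); hence $Q=(0:1:0)$ is the only point at infinity. That the affine part is smooth follows because $\partial/\partial y$ of $f_{\bar r}(x)-(y^{q^{n-1}}+\cdots+y)$ equals $-1$ in characteristic $p$ (each $q^k$ with $k\ge1$ is divisible by $p$), so any singular point must be $Q$. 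Finally, in the chart $y=1$ the local equation is $\sum_a c_a x^a z^{d-a}-\sum_{k=0}^{n-1}z^{d-q^k}$; the first sum is homogeneous of degree $d$, while the monomials $z^{d-q^k}$ have degrees $d-q^k\ge d-q^{n-1}=q^{r-1}$, the minimum being attained only at $k=n-1$. Thus the multiplicity of $\HH$ at $Q$ is $q^{r-1}$, which exceeds $1$ precisely when $r\ge2$, i.e. when $n\ge3$; for $n=2$ one has $r=1$ and recovers the smooth Hermitian curve.

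Part (3) is the heart of the matter. Since the affine curve is smooth, $q^n$-Frobenius nonclassicality amounts to the tangent-line condition $f_{\bar r}'(x)(x^{q^n}-x)-(y^{q^n}-y)\equiv0$ along $\HH$ (again using $G_y=-1$). Applying the $q$-power map to the defining equation and subtracting gives $y^{q^n}-y=f_{\bar r}(x)^q-f_{\bar r}(x)$ on $\HH$, so everything reduces to the one-variable identity
$$
f_{\bar r}^{\,q}-f_{\bar r}=(x^{q^n}-x)\,f_{\bar r}'.
$$
To prove it I would use Lemma \ref{lem:garcia_prop}(a): $f_{\bar r}$ takes values in $\fq$ on $\fqn$, so $f_{\bar r}^{\,q}-f_{\bar r}$ vanishes on $\fqn$ and is divisible by $x^{q^n}-x$; write $f_{\bar r}^{\,q}-f_{\bar r}=(x^{q^n}-x)h$. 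Differentiating, and using that $q$-th powers have vanishing derivative, gives $f_{\bar r}'-h=-(x^{q^n}-x)h'$. Now $\deg f_{\bar r}'\le d-1<q^n$ and $\deg h=qd-q^n<q^n$ (since $d=q^{n-1}+q^{r-1}<2q^{n-1}$), so the left-hand side has degree $<q^n$; were $h'\neq0$ the right-hand side would have degree $\ge q^n$, a contradiction. Hence $h'=0$, $h=f_{\bar r}'$, and the identity follows. This identity is the real obstacle: the reduction to one variable is routine, whereas $f_{\bar r}^{\,q}-f_{\bar r}=(x^{q^n}-x)f_{\bar r}'$ is the genuine content. The divisibility-and-differentiation argument is the slick route, avoiding the explicit expansion $f_{\bar r}=T_{n-r}(x^{1+q^r})+T_r(x^{1+q^{n-r}})$, and it is in the degree bound $\deg h<q^n$ that the inequality $r<n$ quietly enters.
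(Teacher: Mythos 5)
Your proposal is correct, and for parts (1) and (2) it follows the paper's route: the paper likewise checks $\delta_0=n-r<r=\delta_1$ and $\gcd(\delta,n)=\gcd(n-r,n)=1$ and then quotes Theorem \ref{the:herm_gen} and Corollary \ref{coro do g} for the degree, genus and point count, and disposes of part (2) by the Jacobian criterion; you merely make explicit what the paper leaves implicit (that $G_y=-1$ kills all affine singularities, and that the multiplicity of $Q$ in the chart $y=1$ is $q^{r-1}$, which is $>1$ exactly when $n\geq 3$). The genuine divergence is in part (3). The paper does not prove Frobenius nonclassicality at all: it cites \cite[Corollary 3.8]{Borges}, i.e.\ the general equivalence between irreducible curves $g(y)=f(x)$ with $f,g$ minimal value set polynomials sharing a value set and $q$-Frobenius nonclassical curves. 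You instead give a self-contained argument: reduce the tangent-line condition to the one-variable identity $f_{\bar r}^{\,q}-f_{\bar r}=(x^{q^n}-x)f_{\bar r}'$ using $y^{q^n}-y=f_{\bar r}^{\,q}-f_{\bar r}$ on the curve, and prove that identity by the divisibility-plus-differentiation trick, with the degree bound $\deg h=q^{r}<q^{n}$ forcing $h'=0$. This is sound (the squarefreeness of $x^{q^n}-x$ justifies the divisibility, and smoothness of the affine model from part (2) justifies working with the generic tangent), and in effect you re-derive the special case of the cited result that is actually needed. What the paper's citation buys is brevity and generality (it covers all $f\in\W$, not just $f_{\bar r}$); what your argument buys is a proof readable without \cite{Borges}, at the cost of relying on the specific degree $d=q^{n-1}+q^{r-1}$ of this family. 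One stylistic caution: the inequality you flag as ``where $r<n$ quietly enters'' is really just $\deg h=qd-q^n=q^{r}<q^{n}$, which holds for every admissible $\bar r$, so nothing here is special to $\HH$ beyond the value of $d$.
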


\begin{proof}
We follow the notation in Section \ref{sec:curve_mvsp}. Since $\bar{r}=(0,r)$ and $r>n/2$ we have $\delta_0=n-r<r=\delta_1$. That gives us the two distinct sequences  $S_0=(\delta_1,\delta_1+\delta_0)$ and $S_1=(\delta_0,\delta_0+\delta_1)$ with $S_1<S_0$ in the lexicographical ordering. Also note that $\gcd(r,n)=1$
implies that $n-r=\delta=\min\{\delta_0,\delta_1\}$ is coprime to $n$. 

Part (1) is  a direct consequence of Theorem \ref{the:herm_gen} and Corolary \ref{coro do g}. Part (2) follows from the proof of Theorem \ref{the:herm_gen} and the Jacobian criterion, while part (3) follows from \cite[Corollary 3.8]{Borges}.
\end{proof}

%

The generalized Hermitian curve  $\GS$  has $N=q^{2n-1}+1$  $\F_{q^n}$-rational points and genus $g=(q^{n-1}-1)q^{n-1}/2$. Note that our curve $\mathcal{H}$ has the same number of  $\F_{q^n}$-rational points, but a 
genus $(q^{n-1}-1)q^{r}/2$, which is considerably smaller than $(q^{n-1}-1)q^{n-1}/2$ (since $r \approx n/2$). For example, for  $q=2$ and $n=5$  the curve $\GS$ has $513$ $\F_{32}$-rational points and genus $g(\GS)=120$, while the curve $\mathcal{H}$ has $513$ $\F_{32}$-rational points, but  genus $g(\mathcal{H})=60$. We should mention that,  by the Osterl\'e bound (see e.g. \cite{Osterle}),  a curve over $\F_{32}$ and with $512$ rational points has genus $g\geq 57$.

\subsection{Castle curves and the Weierstrass semigroup at the point at infinity of $\HH$ }\label{sec:weierstrass_semigrp}

Let $Q$ be an $\fqn$-rational point in a curve $X$ over $\fqn$ and write its  Weierstrass semigroup  as $H(Q)=\{m_1=0<m_2<\ldots\}$. The curve $X$ is called a \emph{Castle curve} if $H(Q)$ is symmetric and $\#X(\fqn) = q^n m_2 + 1$. As discussed in the introduction of \cite{Munuera_sepulv_Algebraic_codes_castle}, AG codes are usually difficult to handle  if the geometry of the associated curve is not well understood. Because Castle curves possess some geometric properties that provide a good handling of the parameters of the  associated code, they are well suited for constructing AG codes. In fact, many of the examples of curves yielding AG codes with good parameters are Castle curves. As proved by Bulygin \cite{bulygin_generalize_hermitian}, in the case $q=2$, and  Munuera, Sep\'ulveda, and Torres \cite{munuera_sepulv_Generl_hermitian_code} in the general case, the curve $\GS$ yields linear codes with new records on the parameters. Not incidentally, \cite{Munuera_sepulv_Algebraic_codes_castle} proves that $\GS$ is a Castle curve.
As is usual in this kind of application, the starting point of such results was the computation of the Weierstrass semigroup at $P=(0:1:0) \in \GS$. 
\begin{theorem}[{\cite[Proposition 2.2]{munuera_sepulv_Generl_hermitian_code}}]
The Weierstrass semigroup at $P$ is
$$
H(P)=\langle q^{n-1},q^{n-1}+q^{n-2},q^n+1\rangle.
$$
In particular it is symmetric.
\end{theorem}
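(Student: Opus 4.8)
The plan is to determine $H(P)$ by trapping it between an explicit subsemigroup and the gap count dictated by the genus. Set $\Lambda=\langle q^{n-1},\,q^{n-1}+q^{n-2},\,q^{n}+1\rangle$. First I would prove the inclusion $\Lambda\subseteq H(P)$ by exhibiting, for each of the three listed integers, a function on $\GS$ that is regular away from $P$ and has a pole of exactly that order at $P$. Then I would show that $\Lambda$ has precisely $g=(q^{n-1}-1)q^{n-1}/2$ gaps. Since the number of gaps of $H(P)$ always equals the genus $g$, and $\Lambda\subseteq H(P)$, the equality $\#(\mathbb{N}_0\setminus\Lambda)=g$ forces $\Lambda=H(P)$.

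For the inclusion, I would start from the coordinate functions. Analysing the unique (wildly ramified) place $P$ lying over the pole of $x$ gives $-v_P(x)=q^{n-1}$, so $x$ realises the first generator; for $n=2$ the defining relation then yields $-v_P(y)=q+1$ directly, and the second generator $q^{n-1}+q^{n-2}$ appears in general as the pole order of $y$ (or of a small modification of it). The third generator $q^{n}+1$ is the delicate one, since it is not the pole order of any monomial in $x$ and $y$. Here I would use the decomposition $f=T_{\delta}(u)+v^{q^{\delta}}$ of the shape in Lemma \ref{Lema do f}: by performing the substitutions that strip off the $p$-th power part $v^{q^{\delta}}$ from the defining equation, one produces polynomial combinations of $x$ and $y$ in which the dominant pole contributions cancel, dropping the pole order below that of the naive monomials to the claimed values. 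Because these functions are polynomials in the coordinates, they are automatically regular off $P$.

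For the gap count I would invoke the telescopic criterion of Kirfel--Pellikaan \cite{Kirfel_Pellikaan_telescopic}. With $a_1=q^{n-1}$, $a_2=q^{n-1}+q^{n-2}$, $a_3=q^{n}+1$ one gets $d_1=q^{n-1}$, $d_2=\gcd(a_1,a_2)=q^{n-2}$ and $d_3=\gcd(d_2,a_3)=1$ (since $\gcd(q,q^{n}+1)=1$). The telescopic conditions hold: $a_2/d_2=q+1\in\langle a_1/d_1\rangle=\mathbb{N}_0$, and $a_3/d_3=q^{n}+1\in\langle a_1/d_2,a_2/d_2\rangle=\langle q,q+1\rangle$, the last membership because $q^{n}+1$ exceeds the Frobenius number $q^{2}-q-1$ of $\langle q,q+1\rangle$. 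Thus $\Lambda$ is telescopic, hence symmetric, which already gives the final assertion of the theorem. Its Frobenius number is $F(\Lambda)=-a_1+(\tfrac{d_1}{d_2}-1)a_2+(\tfrac{d_2}{d_3}-1)a_3=-q^{n-1}+(q-1)(q^{n-1}+q^{n-2})+(q^{n-2}-1)(q^{n}+1)=q^{2n-2}-q^{n-1}-1$, so by symmetry the number of gaps is $\tfrac12(F(\Lambda)+1)=\tfrac12(q^{2n-2}-q^{n-1})=(q^{n-1}-1)q^{n-1}/2=g$, closing the argument.

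The main obstacle is the middle step: the pole-order bookkeeping at $P$. For $n\ge 3$ the degree of $f$ is divisible by $p$, so $P$ is wildly ramified and the valuations of the auxiliary functions cannot be read off from the Newton polygon; the cancellations that lower a pole order all the way down to $q^{n}+1$ must be produced and verified by hand from the explicit form of $f$. This is precisely the ``ad hoc'' part of the computation. Everything downstream -- the telescopic verification, the Frobenius/genus formula, and the gap-count conclusion -- is then routine once the three pole orders have been established.
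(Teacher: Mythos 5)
First, a point of orientation: the paper does not prove this statement at all --- it is imported verbatim from Munuera--Sep\'ulveda--Torres \cite{munuera_sepulv_Generl_hermitian_code} --- so there is no in-paper argument to compare against. Your architecture is nonetheless the standard one, and it is exactly the scheme the paper itself deploys for its own curve $\HH$ in Theorem \ref{the:weiertrass_gen}: prove $\Lambda\subseteq H(P)$ by exhibiting functions regular off $P$ with the prescribed pole orders, then show $\Lambda$ is telescopic with exactly $g$ gaps, so that the Weierstrass gap theorem forces $\Lambda=H(P)$. Your combinatorial half checks out completely: $d_2=q^{n-2}$, $d_3=1$, the memberships $q+1\in\langle 1\rangle$ and $q^n+1=q\,(q^{n-1}-1)+(q+1)\in\langle q,q+1\rangle$ hold, and the Frobenius number $q^{2n-2}-q^{n-1}-1$ yields $\#(\mathbb{N}_0\setminus\Lambda)=q^{n-1}(q^{n-1}-1)/2=g(\GS)$. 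The pole orders $-v_P(x)=q^{n-1}$ and $-v_P(y)=q^{n-1}+q^{n-2}$ also follow exactly as you indicate (total ramification of the pole of $x$ together with the defining equation); no ``small modification'' of $y$ is even needed.

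The genuine gap is the third generator. For $n\geq 3$ one has $\gcd(q^{n-1},\,q^{n-1}+q^{n-2})=q^{n-2}>1$, so $\langle q^{n-1},q^{n-1}+q^{n-2}\rangle$ is not even a numerical semigroup and a function with pole divisor $(q^n+1)P$ is indispensable; producing it is the entire nontrivial content of the theorem, and your proposal never does so. You describe a heuristic (``strip off the $p$-th power part $v^{q^{\delta}}$ so that the dominant pole contributions cancel''), but, as you yourself concede, $P$ is wildly ramified, so no valuation can be read off from degrees: one must write down an explicit $w\in\F_{q^n}[x,y]$, derive an identity of the form $w^{q^n}=w+(\text{terms of known order at }P)$ from the curve equation, and apply the strict triangle inequality --- this is precisely the several-page computation the paper carries out for $\HH$ with the functions $s$, $w_1$, $w_2$ in the lemmas preceding the proof of Theorem \ref{the:weiertrass_gen}, and it is the part of the argument that actually depends on the arithmetic of the Garcia--Stichtenoth polynomial rather than on semigroup generalities. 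Until such a $w$ with $\divv(w)=(q^n+1)P$ is exhibited and verified, the inclusion $\Lambda\subseteq H(P)$, and with it the theorem, remains unproved.
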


%


Given that the norm-trace and the generalized Hermitian curves are Castle curves, it is tempting to  suspect that many of the curves $\X$ (as defined in Section \ref{sec:curve_mvsp}) are also Castle Curves, and may be used for the construction of AG codes with good parameters. The aim of this section is to use the curve   $\mathcal{H}$ to provide more evidence to support this  idea. For this matter, we compute
the Weierstrass semigroup at the unique point at infinity of $ \HH$.

Following the notation in the proof of Theorem \ref{the:herm_gen}, we let $E=K(x,y)$ be the function field of the curve defined by \eqref{eq:curve_many_pnts}, and $\Q=(0:1:0) $ be the only pole of the function $x \in E$. 

\begin{theorem}\label{the:weiertrass_gen}

 Suppose $n\geq 3$, and let  $H(Q)$ be  the Weierstrass semigroup at $\Q$.  Then
$$
H(\Q)=\left\langle q^{n-1},q^{n-1}+q^{r-1},q^n+q^{n-r},q^{2r-1}+q^{n-r-1}, q^{2r}-q^n+q^r+1\right\rangle.
$$
Moreover,  $H(\Q)$ is a telescopic semigroup and, in particular, symmetric.  

\begin{corollary}\label{cor:castle_curv}
 $\HH$ is a Castle curve.
\end{corollary}
\begin{proof}
It follows directly from the fact that 
$$
H(Q)=\left\langle q^{n-1},q^{n-1}+q^{r-1},q^{2r-1}+q^{n-r-1},q^{2r}-q^n+q^r+1\right\rangle
$$ is symmetric, $m_2=q^{n-1}$ and $\mathcal{F}(\fqn)=q^{2n-1}+1=q^nm_2+1$.
\end{proof}

%
%

%
%

\end{theorem}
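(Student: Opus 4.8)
The plan is to compute the Weierstrass semigroup $H(\Q)$ by exhibiting explicit functions in $E$ with prescribed pole orders at $\Q$, and then verifying that the numerical semigroup they generate has the correct gap structure via the telescopic property. Throughout I work with the function field $E=K(x,y)$ where $x,y$ satisfy \eqref{eq:curve_many_pnts} with $\bar r=(0,r)$, so that $\delta_0=n-r$ and $\delta_1=r$, and $\Q$ is the unique pole of $x$. Since the pole of $x$ is totally ramified in $E/F$ (as recorded in the proof of Theorem \ref{the:herm_gen}), the valuation $v_\Q$ is determined by $v_\Q(x)=-q^{n-1}$, and I would first pin down the pole orders of $x$ and $y$ at $\Q$. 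From the defining equation, comparing the pole of the trace $y^{q^{n-1}}+\cdots+y$ with that of $f_{\bar r}(x)$ (whose degree is $q^{n-1}+q^{r-1}$ by Proposition \ref{prop:curve_many_pnts}(1)) forces $v_\Q(y)=-(q^{n-1}+q^{r-1})$, giving the first two generators $q^{n-1}$ and $q^{n-1}+q^{r-1}$.

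Next I would produce the remaining three generators as pole orders of carefully chosen combinations of $x$, $y$, and their Frobenius twists. The shape of the generators $q^n+q^{n-r}$, $q^{2r-1}+q^{n-r-1}$, and $q^{2r}-q^n+q^r+1$ strongly suggests they arise from functions built out of $x^{q^{\ast}}y^{q^{\ast}}$-type products dictated by the structure of $u(x)$ and $v(x)$ in Lemma \ref{Lema do f}; for $\bar r=(0,r)$ one has $u(x)=f_0(x)+f_1(x)$ with $\deg u(x)=q^r$, so I would track how the intermediate functions $z$ appearing in the proof of Theorem \ref{the:herm_gen} (the combinations $S_{m,n}$ from Lemma \ref{lm:NewPoly}) contribute new pole orders. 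Concretely, I would write down explicit elements whose valuations at $\Q$ equal the negatives of the claimed generators, check the valuations by the ultrametric inequality (ensuring no cancellation of leading terms, which is where the coprimality $\gcd(r,n)=1$ and the minimality of $\delta=n-r$ do the work), and thereby show each generator lies in $H(\Q)$.

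The decisive step is to prove that these five elements generate \emph{all} of $H(\Q)$ and that the semigroup is telescopic. Recall (see \cite[Definition 6.1]{Kirfel_Pellikaan_telescopic}) that a semigroup $\langle a_1,\ldots,a_k\rangle$ is telescopic if, setting $d_i=\gcd(a_1,\ldots,a_i)$ and $d_0=a_1$, each $a_i/d_i$ lies in the semigroup generated by $a_1/d_{i-1},\ldots,a_{i-1}/d_{i-1}$. I would verify this condition directly for the generator list, computing the successive gcd's $d_i$ (the coprimality of $r$ and $n$ guarantees $d_k=1$) and checking the membership conditions. The payoff is twofold: a telescopic semigroup is automatically symmetric, and its genus is given by the closed formula $\tfrac12\bigl(1+\sum_{i=1}^k(a_i/d_i-1)d_{i-1}\bigr)$. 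I would then compute this genus and confirm it equals $q^r(q^{n-1}-1)/2$, the value of $g(\HH)$ from Proposition \ref{prop:curve_many_pnts}(1). Since the semigroup generated by functions we have exhibited is contained in $H(\Q)$ and both have the same (finite) number of gaps, they must coincide, completing the proof.

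The main obstacle I anticipate is the third paragraph: verifying the telescopic membership conditions and matching the resulting genus formula against $q^r(q^{n-1}-1)/2$ requires a delicate and somewhat intricate manipulation of the five generators, since they involve mixed powers of $q$ with signs (note the subtraction in $q^{2r}-q^n+q^r+1$). The sign makes it non-obvious even that this quantity is a positive element expressible in terms of the others, and getting the gcd-tower to collapse correctly under $\gcd(r,n)=1$ is where the argument is most error-prone. A secondary difficulty is exhibiting honest functions realizing the last two generators without hidden leading-term cancellations; this is precisely where the inequality $\delta_0=n-r<r=\delta_1$ and the explicit form of $u$ and $v$ from Lemma \ref{Lema do f} must be used to control $v_\Q$.
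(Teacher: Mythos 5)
Your overall strategy is exactly the one the paper follows: obtain the inclusion $\supseteq$ by exhibiting functions with the claimed pole orders at $\Q$, then verify that the numerical semigroup they generate is telescopic and that its genus, computed from the telescopic formula, equals $g(\HH)=q^r(q^{n-1}-1)/2$, which forces equality. The first two generators are handled exactly as you say (total ramification of the pole of $x$ plus the degree of $f_{\bar r}$), and the telescopic tower works out essentially as you predict, with $d_1=q^{n-1}$, $d_2=q^{r-1}$, $d_3=q^{n-r}$, $d_4=q^{n-r-1}$, $d_5=1$; this bookkeeping, which you flag as the main obstacle, is in fact a short direct computation (note also that the generator $q^n+q^{n-r}$ is genuinely used in the tower even though it is omitted from the display in Corollary \ref{cor:castle_curv}).

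The genuine gap is the step you defer with ``I would write down explicit elements'': producing functions with pole orders $q^n+q^{n-r}$, $q^{2r-1}+q^{n-r-1}$ and $q^{2r}-q^n+q^r+1$ is the bulk of the proof and does not follow from the generalities you invoke. The paper needs the specific function $s=x^{q^{2r-n}-1}y-x^{1+q^r}+y^{q^r}-x^{q^{2r-n}+q^r}$ for the order $q^{2r-1}+q^{n-r-1}$, and then a two-stage construction $w_0,w_1,w_2$ for the remaining two, where $w_1$ involves a telescoping sum $\sum_i w_0^{q^{1+(2r-n)i}}$ whose length requires $(n-r\pm1)/(2r-n)$ to be an integer, forcing a case split (phrased in the paper in terms of $r\bmod 4$). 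Each valuation is then verified by raising to the $q^n$-th power, substituting the relation $y^{q^n}=y-x^{q^{n-r}+1}-x^{q^r+1}+x^{q^{n-r}+q^n}+x^{q^n+q^r}$ coming from \eqref{eq:curve_many_pnts}, arranging the cancellations so that the surviving terms have pairwise distinct valuations, and only then applying the ultrametric inequality. None of this is read off from the shape of $u$ and $v$ in Lemma \ref{Lema do f} alone, so while your outline points in the right direction, it is missing the actual content of the two key lemmas and therefore does not yet constitute a proof of the inclusion $\supseteq$ for three of the five generators.
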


The reminder of this section is used to prove Theorem \ref{the:weiertrass_gen}  via  a collection of partial results. The following remark  is key in most of our computations.

\begin{remark}\label{rem.Id}
It follows from \eqref{eq:curve_many_pnts} that
\begin{equation}\label{eq:identity_yqn}
 y^{q^n}=y-x^{q^{n-r}+1}-x^{q^r+1}+x^{q^{n-r}+q^n}+x^{q^n+q^r}.
\end{equation}

\end{remark}

\begin{lemma}
Let $s=x^{q^{2r-n}-1}\, y-x^{1+q^r}+y^{q^r}-x^{q^{2r-n}+q^r}$ be a function in $E$. Then:
\begin{itemize}
 \item $\divv(x)=q^{n-1}\Q$;
 \item  $\divv(y)=(q^{n-1}+q^{r-1})\Q$ ; and
 \item  $\divv(s)=(q^{2r-1}+q^{n-r-1})\Q$.
\end{itemize}
\end{lemma}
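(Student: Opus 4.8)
The plan is to compute the divisor of poles of each of the three functions $x$, $y$, and $s$ at the unique point $\Q=(0:1:0)$, using the Artin-Schreier structure of the extension $E/F$ together with the identity \eqref{eq:identity_yqn} from Remark \ref{rem.Id}. Since $\Q$ is the only pole of $x\in E$ and the pole of $x$ is totally ramified in $E/F$ (as recorded in the proof of Theorem \ref{the:herm_gen}), every element of $E$ that is a polynomial in $x$ and $y$ has its only pole at $\Q$, so the content of each assertion is purely the computation of the pole order $v_\Q(\cdot)$. I will write $v$ for the valuation at $\Q$.

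First I would pin down $v_\Q(x)$ and $v_\Q(y)$. Since $[E:F]=q^{n-1}$ and the pole of $x$ is totally ramified, the ramification index gives $v_\Q(x)=-q^{n-1}$, which is the first claim $\divv(x)=q^{n-1}\Q$. For $y$, the defining equation \eqref{eq:curve_many_pnts} reads $T_n(y)=f_{\bar r}(x)$ with (by Proposition \ref{prop:curve_many_pnts}(1) and the case $\bar r=(0,r)$) the right-hand side of degree $q^{n-1}+q^{r-1}$ in $x$; the dominant term on the left is $y^{q^{n-1}}$. Balancing pole orders, $q^{n-1}\,v_\Q(y)=(q^{n-1}+q^{r-1})\,v_\Q(x)=-(q^{n-1}+q^{r-1})q^{n-1}$, whence $v_\Q(y)=-(q^{n-1}+q^{r-1})$, giving the second claim. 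I should check that the subdominant terms $y^{q^{n-2}},\dots,y$ on the left and the lower-degree terms of $f_{\bar r}(x)$ on the right have strictly smaller pole order than this dominant balance, so that no cancellation spoils the leading-order count; this is routine since each lower term has pole order a strictly smaller power of $q$ times a bounded factor.

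The main work is the third claim, $\divv(s)=(q^{2r-1}+q^{n-r-1})\Q$, where $s=x^{q^{2r-n}-1}y-x^{1+q^r}+y^{q^r}-x^{q^{2r-n}+q^r}$. Here the point is that $s$ has been engineered so that its three a priori largest-pole terms cancel to leading order, leaving a genuinely smaller pole order. Using the already-established values, the naive pole orders of the four summands are $-\big((q^{2r-n}-1)q^{n-1}+(q^{n-1}+q^{r-1})\big)$, $-(1+q^r)q^{n-1}$, $-q^r(q^{n-1}+q^{r-1})$, and $-(q^{2r-n}+q^r)q^{n-1}$; the largest of these collide, and the substance of the lemma is that the leading coefficients in a local expansion cancel. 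The cleanest way to expose this is to substitute the identity \eqref{eq:identity_yqn} for $y^{q^n}$ (equivalently, to use $T_n(y)=f_{\bar r}(x)$ to rewrite high powers of $y$ in terms of $x$) so that $s$ can be re-expressed, after collecting terms, as a polynomial whose top-degree monomial in $x$ is unambiguous and has degree $q^{2r-1-n}\cdot q^{n}$-order accounting to the asserted value $q^{2r-1}+q^{n-r-1}$. I expect the central obstacle to be precisely this cancellation bookkeeping: one must track the coefficients of the colliding top terms and verify they sum to zero, then identify the next-largest surviving monomial and confirm its pole order is exactly $q^{2r-1}+q^{n-r-1}$. A careful local computation at $\Q$, using $v_\Q(x)=-q^{n-1}$ and the relation $y\sim$ (leading term in $x$) coming from \eqref{eq:identity_yqn}, should make the vanishing of the leading coefficient transparent and yield the stated pole order; I would present this by writing the local leading behavior of each of the four terms explicitly and summing.
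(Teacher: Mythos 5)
Your treatment of $\divv(x)$ and $\divv(y)$ matches the paper's: total ramification of the pole of $x$ gives $v_\Q(x)=-q^{n-1}$, and balancing $q^{n-1}v_\Q(y)=v_\Q(f(x))$ gives $v_\Q(y)=-(q^{n-1}+q^{r-1})$. The problem is the third item, where your proposal stops at exactly the point where the proof has to happen. You correctly observe that the largest pole orders among the four summands of $s$ collide (the terms $y^{q^r}$ and $-x^{q^{2r-n}+q^r}$ both have pole order $q^{n+r-1}+q^{2r-1}$), so a cancellation analysis is unavoidable; but the mechanism you propose for carrying it out does not apply as stated. The identity \eqref{eq:identity_yqn} expresses $y^{q^n}$ in terms of lower powers, and $s$ contains no $y^{q^n}$ --- its highest power of $y$ is $y^{q^r}$ with $r<n$ --- so one cannot ``substitute the identity for $y^{q^n}$'' into $s$ itself. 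The missing idea is to pass to $s^{q^n}$: raising $s$ to the $q^n$-th power produces the terms $y^{q^n}$ and $y^{q^{n+r}}=(y^{q^n})^{q^r}$, to which \eqref{eq:identity_yqn} does apply, and after simplification one obtains (this is \eqref{eq:sqn} in the paper)
$$
s^{q^n}=x^{q^{2r}-q^{n}}\bigl(y-x^{q^r+1}-x^{q^{n-r}+1}+x^{q^n+q^{n-r}}\bigr)+\bigl(y-x^{q^{n-r}+1}\bigr)^{q^r},
$$
in which the surviving terms have pairwise distinct valuations at $\Q$. The strict triangle inequality then gives $v_\Q(s^{q^n})=v_\Q\bigl(x^{q^{2r}+q^{n-r}}\bigr)=-q^{n-1}(q^{2r}+q^{n-r})$ with no further bookkeeping, and dividing by $q^n$ yields $v_\Q(s)=-(q^{2r-1}+q^{n-r-1})$.

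Your fallback --- a direct local expansion of $y$ at $\Q$ --- would in principle work, but it is substantially harder than you suggest: after the leading terms of $y^{q^r}$ and $-x^{q^{2r-n}+q^r}$ cancel, the next candidate pole orders, $q^{n+r-1}+q^{n-1}$ from $-x^{1+q^r}$ and $q^{2r-1}+q^{r-1}$ from $x^{q^{2r-n}-1}y$, both still exceed the asserted $q^{2r-1}+q^{n-r-1}$, so there are further stages of cancellation and you would need several terms of the local expansion of $y$ to resolve them. As written, the proposal names the obstacle but does not overcome it.
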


\begin{proof}
 See proof of Theorem \ref{the:herm_gen}, for the fact that $\divv(x)=q^{n-1}\Q$. From \eqref{eq:curve_many_pnts}, it follows that $\Q$ is the only pole of $y$. To compute its order, let $v_Q$ be the valuation at $\Q$ and notice that
 $$
 q^{n-1}v_\Q(y)=v_Q(f(x))=(q^{n-1}+q^{r-1})v_\Q(x).
 $$
 This implies $\divv(y)=(q^{n-1}+q^{r-1})\Q$. For the function $s$, note that \eqref{eq:identity_yqn} gives
\begin{equation*}
\begin{split}
 s^{q^n}=x^{q^{2r}-q^{n}}(y-x^{q^r+1}-x^{q^{n-r}+1}+x^{q^n+q^r}+x^{q^{n-r}+q^n})-x^{q^{n+r}+q^n}\\+(y-x^{q^r+1}-x^{q^{n-r}+1}+x^{q^n+q^r}+x^{q^{n-r}+q^n})^{q^r}-x^{q^{2r}+q^{n+r}}.
\end{split} 
\end{equation*}
 Therefore,
\begin{equation}\label{eq:sqn}
  s^{q^n}=x^{q^{2r}-q^{n}}(y-x^{q^r+1}-x^{q^{n-r}+1}+x^{q^n+q^{n-r}})+(y-x^{q^{n-r}+1})^{q^r}.
\end{equation}
 Since we know the order of each term on the right-hand side of the above equation, the triangle inequality gives us
  $$
  v_\Q(s^{q^n})=v_\Q(x^{q^{2r}+q^{n-r}})=-q^{n-1}({q^{2r}+q^{n-r}}).
  $$ 
  Thus
$$
\divv(s)=(q^{2r-1}+q^{n-r-1})Q.
$$

 \end{proof}

\begin{lemma}
For  $n\geq 3$, define the following functions in $E$:
\begin{itemize}
\item $w_0=y+y^{q^{r}}-x^{1+q^r}-x^{q^{2r-n}+q^r}$
\item  
$w_1= \begin{cases} 
y^{q^{n-r}}- x^{1+q^{n-r}}+\sum\limits_{i=0}^{\frac{2n-3r-1}{2r-n}}w_0^{q^{1+(2r-n)i}}, & \text{ if  $r\not \equiv 3 \mod 4$}\quad \\
y^{q^{n-r+1}}- x^{q+q^{n-r+1}}+\sum\limits_{i=0}^{\frac{2n-3r+1}{2r-n}}w_0^{q^{(2r-n)i}}, & \text{ if  $r\not \equiv 1 \mod 4$}\quad 
\end{cases} $
\item  $w_2:=x^{q^{2r-n+1}-q}w_1-s^q+x^{q^{2r-n+1}-q^{2r-n}-q+1}s$.
\end{itemize} 
Then $\divv(w_1)=(q^{n}+q^{n-r})Q$ and $\divv(w_2)=(q^{2r}-q^{n}+q^{r}+1)Q$.
\end{lemma}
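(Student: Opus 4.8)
The plan is to reduce everything to the computation of pole orders at $\Q$: by the preceding lemmas $\Q$ is the only pole of $x$ and $y$, hence of every polynomial in $x,y$, so $\divv(w_i)=\bigl(-v_{\Q}(w_i)\bigr)\Q$ and it suffices to determine $v_{\Q}(w_1)$ and $v_{\Q}(w_2)$. The engine will be the same as in the computation of $s$: use the curve relation $y+y^{q}+\cdots+y^{q^{n-1}}=\ftil(x)$ together with the identity \eqref{eq:identity_yqn} of Remark \ref{rem.Id} to eliminate the top powers of $y$, then raise to a power of $q$ and read off the unique dominant monomial by the triangle inequality, exactly as was done via \eqref{eq:sqn}. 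Throughout I abbreviate the head of $w_1$ by $\phi:=y^{q^{n-r}}-x^{1+q^{n-r}}$.

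For $w_1$ the first step is the algebraic observation that, since $2r-n\ge 1$ and $n-r\ge 1$, one has $x^{q^{2r-n}+q^{r}}=\bigl(x^{1+q^{n-r}}\bigr)^{q^{2r-n}}$ and $y^{q^{r}}=\bigl(y^{q^{n-r}}\bigr)^{q^{2r-n}}$, so that $w_0=(y-x^{1+q^{r}})+\phi^{q^{2r-n}}$. This exhibits each summand $w_0^{q^{1+(2r-n)i}}$ as a Frobenius power of $y-x^{1+q^{r}}$ plus a higher Frobenius shift of $\phi$, so that upon substituting the curve relation the whole telescoping sum collapses. The exponent bound $(2n-3r-1)/(2r-n)$ (resp. $(2n-3r+1)/(2r-n)$) is chosen precisely so that this collapse terminates and leaves a short expression in $x$ and $y$; I then raise the result to the power $q^{k}$ with $k$ just large enough to produce a $y^{q^{n}}$, substitute \eqref{eq:identity_yqn}, and find that the surviving dominant term is a pure power of $x$ of pole order $q^{k}(q^{n}+q^{n-r})$. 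Dividing by $q^{k}$ gives $v_{\Q}(w_1)=-(q^{n}+q^{n-r})$. The two branches, according to $r\bmod 4$, differ only by one Frobenius twist in the head ($\phi$ versus $\phi^{q}$) and a corresponding shift of the summation exponents; these are arranged so that every exponent of $q$ occurring is a non-negative integer and the upper limit is integral, and the argument is otherwise identical.

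For $w_2$ I will feed the simplified forms of $w_1$ and $s$ (and, where a $y^{q^{n}}$ appears, the expression \eqref{eq:sqn}) into the definition. The monomial coefficients $x^{q^{2r-n+1}-q}$ on $w_1$ and $x^{q^{2r-n+1}-q^{2r-n}-q+1}$ on $s$, together with $-s^{q}$, are tuned so that the leading poles (all of order $q^{2r}+q^{n-r}$) cancel; after substituting the curve relation and \eqref{eq:identity_yqn} the combination collapses to a short expression, and raising to a suitable power of $q$ and extracting the unique dominant power of $x$ yields $v_{\Q}(w_2)=-(q^{2r}-q^{n}+q^{r}+1)$, i.e.\ $\divv(w_2)=(q^{2r}-q^{n}+q^{r}+1)\Q$.

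I expect the main obstacle to be the bookkeeping of these collapses in the general case rather than any single conceptual point. One must verify that after substituting the curve relation and \eqref{eq:identity_yqn} essentially everything cancels except one monomial, that this monomial is strictly dominant (so that the triangle inequality pins down the valuation with no accidental tie), and that the case distinction on $r\bmod 4$ together with the explicit value $r=r(n)$ keeps every Frobenius exponent a non-negative integer. Organizing the computation by always reducing back to the two relations $w_0=(y-x^{1+q^{r}})+\phi^{q^{2r-n}}$ and \eqref{eq:identity_yqn}, and tracking only the highest-order monomial at each stage, is what makes this manageable.
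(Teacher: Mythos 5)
Your strategy is the same one the paper uses: since $\Q$ is the only pole of $x$ and $y$, it suffices to compute $v_{\Q}(w_1)$ and $v_{\Q}(w_2)$, and one does this by applying a Frobenius power, eliminating $y^{q^{n}}$ via \eqref{eq:identity_yqn}, and isolating a strictly dominant monomial in $x$ with the triangle inequality. Your identity $w_0=(y-x^{1+q^{r}})+\phi^{q^{2r-n}}$ with $\phi=y^{q^{n-r}}-x^{1+q^{n-r}}$ is correct and is indeed the structural reason the paper's computation works. The problem is that the proposal stops at the level of a plan: the entire content of this lemma is the computation you defer to ``bookkeeping,'' and two of the claims you make about how that bookkeeping will go are false as stated, so the plan would need repair before it could be executed.

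First, the sum $\sum_i w_0^{q^{1+(2r-n)i}}$ does not collapse to a short expression in $x$ and $y$ after substituting the curve relation. Writing $w_0^{q^{1+(2r-n)i}}=(y-x^{1+q^{r}})^{q^{1+(2r-n)i}}+\phi^{q^{1+(2r-n)(i+1)}}$ shows that the Frobenius twists of $\phi$ accumulate (from $\phi$ up to $\phi^{q^{n-r}}$) rather than cancel, and the exponents of $y$ occurring in $w_1$ do not assemble into the full trace $T_n(y)$, so the curve relation cannot be applied directly to $w_1$. What is short is the difference $w_1^{q^{n}}-w_1$: the paper establishes $w_1^{q^{n}}=w_1-x^{q+q^{n-r+1}}+x^{q^{n-r+1}+q^{n+1}}+x^{1+q^{n-r}}-x^{q^{n-r}+q^{n}}$, and only then does $q^{n}v_{\Q}(w_1)=v_{\Q}(x^{q^{n-r+1}+q^{n+1}})$ follow; the order of operations (raise to the $q^{n}$-th power first, then telescope) is essential. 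Second, for $w_2$ your assertion that all three summands have leading pole order $q^{2r}+q^{n-r}$ fails for the third: $x^{q^{2r-n+1}-q^{2r-n}-q+1}s$ has pole order $q^{2r}-q^{n}+q^{n-1}+q^{n-r-1}$, strictly smaller than $q^{2r}+q^{n-r}$, so the cancellation you invoke occurs only between the first two terms, and the remainder of that cancellation interacts nontrivially with the third term. The value $q^{2r}-q^{n}+q^{r}+1$ only emerges after the full computation of $w_2^{q^{n}}$ using \eqref{eq:sqn} and the identity for $w_1^{q^{n}}$, together with a verification that no two surviving terms share the same valuation. None of these verifications appear in the proposal, and they are precisely where the lemma lives.
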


\begin{proof}
\begin{enumerate}   
\item [(i)] 
 We will  prove  the case $r\not \equiv 3  \mod 4$; the other one  is analogous. To find the order of  $w_1^{q^n}$, first note that from \eqref{eq:identity_yqn}
  \begin{itemize}
\item $(y^{q^{n-r}}- x^{1+q^{n-r}})^{q^n}=y^{q^{n-r}}-x^{q^{n-r}+q^{2n-2r}}-x^{q^{n-r}+q^{n}}+x^{q^{2n-r}+q^{2n-2r}}$, and 
\item $w_0^{q^n}=y+y^{q^r}-x^{1+q^{n-r}}-x^{1+q^r}+x^{q^{n-r}+q^n}-x^{q^r+q^{2r}}$
\end{itemize} 
The assumption on $r$ implies that $t:=(n-r-1)/(2r-n)$ is an integer. Thus we have
\begin{equation*}
\begin{split}
w_1^{q^n}=y^{q^{n-r}}-x^{q^{n-r}+q^{2n-2r}}-x^{q^{n-r}+q^{n}}+x^{q^{2n-r}+q^{2n-2r}}\\
+ \sum\limits_{i=0}^{t-1}(y+y^{q^r}-x^{1+q^r})^{q^{1+(2r-n)i}}-  \sum\limits_{i=0}^{t-1}(x^{1+q^{n-r}})^{q^{1+(2r-n)i}}\\
+\sum\limits_{i=0}^{t-1}(x^{{q^{n-r}+q^n}})^{q^{1+(2r-n)i}}-\sum\limits_{i=0}^{t-1}({x^{q^r+q^{2r}}})^{q^{1+(2r-n)i}}.
\end{split}
\end{equation*}  
 It can be easily checked that
 \begin{itemize}
\item$\sum\limits_{i=0}^{t-1}(x^{1+q^{n-r}})^{q^{1+(2r-n)i}}=x^{q+q^{n-r+1}}-x^{q^{n-r}+q^{2n-2r}}+\sum\limits_{i=0}^{t-1}(x^{q^{2r-n}+q^r})^{q^{1+(2r-n)i}}$\\
 and 
 \item  $\sum\limits_{i=0}^{t-1}(x^{{q^{n-r}+q^n}})^{q^{1+(2r-n)i}}-\sum\limits_{i=0}^{t-1}(x^{{q^{r}+q^{2r}}})^{q^{1+(2r-n)i}}=x^{{q^{n-r+1}+q^{n+1}}}-x^{{q^{2n-2r}+q^{2n-r}}}$
 \end{itemize} 
   Therefore 
   \begin{equation}\label{f4qn}
   w_1^{q^n}=w_1-x^{q+q^{n-r+1}}+x^{{q^{n-r+1}+q^{n+1}}}+x^{1+q^{n-r}}-x^{q^{n-r}+q^{n}},
   \end{equation}
and since  $v_Q(w_1^{q^n}-w_1)=v_Q(-x^{q+q^{n-r+1}}+x^{{q^{n-r+1}+q^{n+1}}}+x^{1+q^{n-r}}-x^{q^{n-r}+q^{n}})$,
triangle inequality gives $q^nv_Q(w_1)=v_Q(x^{{q^{n-r+1}+q^{n+1}}})$. Hence $v_Q(w_1)=-(q^{n-r}+q^n)$.

\item [(ii)] From $w_2^{q^n}=x^{q^{2r+1}-q^n}w_1^{q^n}-s^{q^{n+1}}+x^{q^{2r+1}-q^{2r}-q^{n+1}+q^n}s^{q^n},$
 and equations \eqref{eq:sqn} and  \eqref{f4qn}, we obtain
 
 \begin{equation*}
\begin{split}
 w_2^{q^n}=x^{q^{2r+1}-q^{n+1}}w_1+x^{q^{2r+1}-q^{n+1}}(-\cancel{x^{q+q^{n-r+1}}}+\cancel{x^{{q^{n-r+1}+q^{n+1}}}}+\cancel{x^{1+q^{n-r}}}-\cancel{x^{q^{n-r}+q^{n}}})\\
-x^{q^{2r+1}-q^{n+1}}(y^q-x^{q^{r+1}+q}-\cancel{x^{q^{n-r+1}+q}}+\cancel{x^{q^{n+1}+q^{n-r+1}}})+(y-x^{q^{n-r}+1})^{q^{r+1}}\\
+x^{q^{2r+1}-q^{n+1}}(y-x^{q^r+1}-\cancel{x^{q^{n-r}+1}}+\cancel{x^{q^n+q^{n-r}}})+x^{q^{2r+1}-q^{2r}-q^{n+1}+q^n}(y-x^{q^{n-r}+1})^{q^r}.
\end{split}
\end{equation*}

 After rearranging terms, we have
\begin{equation*}
\begin{split}
 w_2^{q^n}=x^{q^{2r+1}-q^{n+1}}(w_1+y-y^q-x^{q^r+1}-x^{q^{n-r}+1})\\
+(y-x^{q^{n-r}+1})^{q^{r+1}}+x^{q^{2r+1}-q^{2r}-q^{n+1}+q^n}(y-x^{q^{n-r}+1})^{q^r}\\
+x^{q^{2r+1}-q^{n+1}+q^{r+1}+q}.
\end{split}
\end{equation*}  
 It can be checked that all terms of such sum have different  order at $Q$. The triangle inequality
 gives  $v_Q( w_2^{q^n})=v_Q( x^{q^{2r+1}-q^{n+1}+q^{r+1}+q})$, and then $v_Q( w_2)=-(q^{2r}-q^{n}+q^{r}+1)$.
\end{enumerate}           
         
\end{proof}

We are ready to prove the main result of this section.

\begin{proof}[Proof of Theorem \ref{the:weiertrass_gen}]
 The inclusion 
 $$
 H(Q) \supseteq \left\langle q^{n-1},q^{n-1}+q^{r-1},q^n+q^{n-r},q^{2r-1}+q^{n-r-1}, q^{2r}-q^n+q^r+1\right\rangle
 $$
 follows immediately from the previous lemmas.
 

To prove equality, we first show that the semigroup $S=\langle q^{n-1},q^{n-1}+q^{r-1},q^{2r-1}+q^{n-r-1},q^{2r}-q^n+q^r+1\rangle$ is telescopic, see \cite[Definition 6.1]{Kirfel_Pellikaan_telescopic}. Using the notation therein we find that
\begin{itemize}
 \item $d_1=q^{n-1}$ and $S_1=\langle1\rangle$;
 \item $d_2=q^{r-1}$ and $S_2=\left\langle q^{n-r},q^{n-r}+1\right\rangle$;
 \item $d_3=q^{n-r}$ and $S_3=\left\langle q^{r-1},q^{r-1}+q^{2r-n-1},q^{r}+1\right\rangle$; and
 \item $d_4=q^{n-r-1}$ and  $S_4=\left\langle q^{r},q^{r}+q^{2r-n},q^{r+1}+q,q^{3r-n}+1\right\rangle$.
  \item $d_5=1$ and  $S_5= S$.
\end{itemize}
Clearly, $q^{n-r}+1\in S_1$ and  $q^{r}+1\in S_2$. Also,
$$
q^{3r-n}+1=(q^{2r-n+1}-q)q^{r-1}+(q^{r}+1)\in S_3,
$$
and
$$
q^{2r}-q^n+q^r+1=(q^r-q^{n-r}-q^{2r-n}+1)q^{r}+(q^{3r-n}+1)\in S_4.
$$
Therefore, $S$ is telescopic. In particular it is symmetric (see \cite[Lemma 6.5]{Kirfel_Pellikaan_telescopic}). To finish the proof of the theorem, all we are left to show is that the genus of $S$ is equal to the genus of the curve given by \eqref{eq:curve_many_pnts}. 
To compute the genus of $S$ we use the formula (see \cite[Lemma 6.5]{Kirfel_Pellikaan_telescopic})
$$
g(S)=\left(\sum_{i=1}^5 (d_{i-1}/d_i-1)a_i+1\right)/2,
$$
for the genus of a telescopic semigroup. This shows that $g(S)=q^r(q^{n-1}+1)/2$, which is indeed the genus of our curve (see Theorem \ref{the:herm_gen}).
\end{proof}




\section{Acknowledgments}

The first author was partially supported by FAPESP-Brazil grant 2011/19446- 3.

\bibliography{borges_min}
\bibliographystyle{amsalpha}

\end{document}